\theoremstyle{plain}
\newtheorem{ther}{Theorem}[section]
\newtheorem{lem}[ther]{Lemma}
\newtheorem{prop}[ther]{Proposition}
\newtheorem{remark}[ther]{Remark}
\newtheorem{assu}[ther]{Assumption}
\newtheorem{exam}[ther]{Example}
\newcommand{\bea}{\begin{eqnarray}}
\newcommand{\eea}{\end{eqnarray}}
\newcommand{\ba}{\begin{eqnarray*}}
\newcommand{\ea}{\end{eqnarray*}}
\newcommand{\nn}{\nonumber}
\newcommand{\bq}{\begin{eqnarray}}
\newcommand{\eq}{\end{eqnarray}}
\begin{document}

\begin{frontmatter}
\title{The coalescent tree of a Markov branching process with generalised logistic growth}

\begin{aug}
\author{\fnms{David} \snm{Cheek}}

\address{Massachusetts General Hospital, Harvard Medical School\\
149 13th St, Charlestown, MA 02129, USA\\dmcheek7@gmail.com}
\end{aug}

\begin{abstract}
We consider a class of density-dependent branching processes which generalises exponential, logistic and Gompertz growth. A population begins with a single individual, grows exponentially initially, and then growth may slow down as the population size moves towards a carrying capacity. At a time while the population is still growing superlinearly, a fixed number of individuals are sampled and their coalescent tree is drawn. Taking the sampling time and carrying capacity simultaneously to infinity, we prove convergence of the coalescent tree to a limiting tree which is in a sense universal over our class of models.
\end{abstract}
%

\end{frontmatter}

\section{Introduction}
A \textit{coalescent tree} is a random tree which models the genealogical structure of individuals sampled from a population, serving as a stepping stone between genetic data and models of population evolution. The most famous example is Kingman's coalescent \cite{Kingman,KingmanOrigins}. Kingman's coalescent has been incredibly helpful in the statistical analysis of genetic data \cite{Wakeley}, one reason being its robustness: it arises as the limiting coalescent tree from a wide class of models of population evolution, including the Wright-Fisher and Moran processes \cite{Mohle}. In this paper we look for a robustness result on the coalescent tree of a supercritical branching process.

Growing populations beginning with a single individual are found everywhere in biology. Examples are a virus spreading through a population, an animal colonising a new habitat, or a growing population of cancer cells. Such populations are naturally and commonly modelled by supercritical branching processes. However there is a question mark on the realism of supercritical branching processes. Although exponential growth is often realistic while a population is small, exponential growth is never indefinitely realistic. Instead, due to competition for limited resources, populations see their growth slow down as they become larger.

For greater biological realism, density-dependent branching processes have been studied \cite{Klebaner,Hopfner,JagersKlebaner} where individuals reproduce and die at rates which depend on the population size. Branching processes with logistic growth, where the per-individual death rate is linear with the population size, are especially popular \cite{Lambertlog,Campbell}. The downside to density-dependence is of course greatly increased complexity of analysis. Coalescent trees from density-dependent branching processes are apparently unknown.

In this paper we consider a class of density-dependent branching processes which includes exponential, logistic and Gompertz growth models as special cases. Our focus is the continuous-time Markov setting. We refer to the class of models as a `branching process with generalised logistic growth' or a `generalised logistic branching process'. The assumptions defining this class are minimal, namely that the population begins with one individual, that the offspring distribution has a uniformly bounded second moment, and that growth is exponential while the population size is small relative to some carrying capacity. Growth may be progressively constrained as the population size moves towards its carrying capacity. At a time while growth is still superlinear, a fixed number of individuals are sampled from the population and their coalescent tree is drawn. We consider the limit as the sampling time and carrying capacity simultaneously tend to infinity. We prove that the coalescent tree of a generalised logistic branching process coincides, in the limit, with the coalescent tree of a density-\textit{independent} supercritical branching process, whose structure was recently determined \cite{Johnston}.

Our result can be viewed as a universality or robustness result for the coalescent tree of a supercritical branching process, where the robustness is with respect to changes in the reproduction law while the population is large. However the tree crucially depends upon the reproduction law while the population is small, whose parameter space is infinite-dimensional. Thus the `supercritical coalescent' is necessarily robust in a weaker sense than Kingman's coalescent.



The paper is organised as follows. In Section \ref{modelsection}, we introduce the class of density-dependent branching processes and present examples. In Section \ref{mainresultsection}, we state convergence of the coalescent tree and describe its limiting form with reference to recent branching process coalescence works. In Sections \ref{expcoupsection} through to \ref{prooffinishsection}, we prove convergence of the coalescent tree. In Section \ref{expcoupsection}, we couple the population genealogies of a generalised logistic branching process and a supercritical branching process. In Section \ref{geneticdriftsection}, we show that genetic drift is negligible while the population is large and growing superlinearly, and hence the coupled population genealogies asymptotically coincide with respect to a certain representation. In Section \ref{samplingsection}, we translate the viewpoint from the population level to the sample level. In Section \ref{prooffinishsection}, we show convergence of the sampled individuals' ancestral identities and coalescence times.

\section{A generalised logistic branching process}\label{modelsection}
\subsection{Reproduction rates}
The population comprises an integer number of individuals. The individuals reproduce and die at rates which depend on the population size and on some index $\kappa\in\mathbb{N}$, whose purpose is that taking $\kappa\rightarrow\infty$ will represent the large carrying capacity limit. For a given $\kappa\in\mathbb{N}$ and while the population size is $n\in\mathbb{N}$, each individual in the population is replaced by $i\in\mathbb{N}\cup\{0\}$ individuals at rate $\Lambda^{\kappa,n}_i\geq0$. We shall soon describe the population evolution in more detailed language. First we state assumptions on the $\Lambda^{\kappa,n}_i$.
\begin{assu}\label{boundedsecondmoment} The reproduction rate distribution has a uniformly bounded second moment: $\sup_{\kappa,n}\sum_ii^2\Lambda^{\kappa,n}_i<\infty$.
\end{assu}
\begin{assu}\label{asslcc}
There are `intrinsic' reproduction rates which give positive growth. That is, there are $\Lambda^\infty_i\geq0$ for $i\in\mathbb{N}\cup\{0\}$ with  $$\sum_i(i-1)\Lambda^\infty_i>0,$$ and there exists $(n_\kappa)_{\kappa\in\mathbb{N}}\in\mathbb{N}^\mathbb{N}$ with $$\lim_{\kappa\rightarrow\infty}n_\kappa=\infty\quad\text{and}\quad\lim_{\kappa\rightarrow\infty}\sup_{n\leq n_\kappa}\sum_{i=0}^\infty|\Lambda_i^{\kappa,n}-\Lambda^\infty_i|=0.$$
\end{assu}
Assumption \ref{asslcc} defines the sense in which $\kappa\rightarrow\infty$ is the large carrying capacity limit, which will be illuminated by examples in Section \ref{examsec}.
\subsection{Genealogical structure}
 Following standard notation, individuals in the population are denoted by elements of $$\mathcal{T}=\cup_{m\in\mathbb{N}\cup\{0\}}\mathbb{N}^m.$$The set $\mathcal{T}$ has a partial ordering $\prec$ defined by $(u_l)_{l=1}^m\prec (v_l)_{l=1}^n$ if and only if $m<n$ and $(u_l)_{l=1}^m= (v_l)_{l=1}^m$. We write $\preceq$ for $\prec$ or $=$. The possible daughters of $u=(u_1,..,u_m)\in\mathcal{T}$ are denoted $ui=(u_1,..,u_m,i)$ for $i\in\mathbb{N}$.

For each $\kappa\in\mathbb{N}$ the population evolution is described by a continuous-time Markov process,
$(\mathcal{N}_\kappa(t))_{t\geq0}$, whose state space is the finite subsets of $\mathcal{T}$. The process has initial condition
$$
\mathcal{N}_\kappa(0)=\{\emptyset\}=\mathbb{N}^0
$$and sees transitions
$$
\mathcal{N}_\kappa(t)\mapsto\mathcal{N}_\kappa(t)\cup\left(\bigcup_{i=1}^j\{ui\}\right)\backslash\{u\}\quad\text{at rate }\Lambda_j^{\kappa,\#\mathcal{N}_\kappa(t)}I\{u\in\mathcal{N}_\kappa(t)\}
$$
 for $j\in\mathbb{N}\cup\{0\}$, where $\bigcup_{i=1}^0\{ui\}=\emptyset$ and $I$ is the indicator function. We shall write
$$
N_\kappa(t)=\#\mathcal{N}_\kappa(t)
$$
for the population size at time $t\geq0$. Assumption \ref{boundedsecondmoment} ensures that $N_\kappa(t)$ does not explode in finite time, so $\mathcal{N}_\kappa(t)$ is always well defined.

\subsection{Sampling time}
The sampling time $T_\kappa$ is a positive random variable which satisfies the following.
\begin{assu}The sampling time is a stopping time with respect to the filtration $(\mathcal{F}_\kappa(t))_{t\geq0}$ where $\mathcal{F}_\kappa(t)$ is the sigma-algebra generated by $(\mathcal{N}_\kappa(s))_{s\in[0,t]}$.
\end{assu}
\begin{assu}The sampling time is large: $\lim_{\kappa\rightarrow\infty}T_\kappa=\infty$ in distribution.
\end{assu}
\begin{assu}\label{sclb}Growth is superlinear before the sampling time: there exist $\alpha\in(0,1)$ and $c>0$ such that
$$
\lim_{\kappa\rightarrow\infty}\mathbb{P}\left[\forall t\in[0,T_\kappa],\bar{\Lambda}^{\kappa,N_\kappa(t)}\geq \frac{c}{N_\kappa(t)^{\alpha}}\right]=1,
$$
where $\bar{\Lambda}^{\kappa,N_\kappa(t)}=\sum_{i=0}^\infty(i-1)\Lambda_i^{\kappa,N_\kappa(t)}$ is the per-individual growth rate.
\end{assu}Assumption \ref{sclb} can be made sense of by reference to a deterministic growth model. Consider a population of size $n(t)\geq0$ at time $t\geq0$, with $n(0)=1$ and $\frac{dn}{dt}\geq cn^{1-\alpha}$. Then $n(t)\geq (1+c\alpha t)^{1/\alpha}$, which is superlinear.
\subsection{Examples}\label{examsec}
\begin{exam}\label{supex}Exponential growth: $\Lambda_i^{\kappa,n}=\Lambda_i^\infty,T_\kappa=\kappa$.
\end{exam}
\begin{exam}\label{logex1}Logistic growth with death due to competition: $$\Lambda_i^{\kappa,n}=\begin{cases}\Lambda_0^\infty+\frac{n}{\kappa}\sum_{j=0}^\infty  (j-1)\Lambda^\infty_j,\quad& i=0,\\\Lambda_i^\infty,\quad& i\geq1,\end{cases}$$ and $T_\kappa=\min\{t\geq0:N_\kappa(t)\geq x\kappa\}$ for some $x\in(0,1)$. Note the corresponding deterministic growth model: $\frac{dn}{dt}=\bar{\Lambda}^\infty n\left(1-\frac{n}{\kappa}\right)$, where $\bar{\Lambda}^\infty=\sum_{j=0}^\infty(j-1)\Lambda^\infty_j$ is the per-individual growth rate in the absence of a carrying capacity.
\end{exam}
\begin{exam}\label{gompex}
Gompertz growth with death due to competition:$$\Lambda_i^{\kappa,n}=\begin{cases}\Lambda^\infty_0+\frac{\log(n)}{\log(\kappa)}\sum_{j=0}^\infty  (j-1)\Lambda^\infty_j,\quad&i=0,\\\Lambda^\infty_i,\quad&i\geq1,\end{cases}$$ and $T_\kappa=\min\{t\geq0:\log(N_\kappa(t))\geq x\log(\kappa)\}$ for some $x\in(0,1)$. Note the corresponding deterministic growth model: $\frac{dn}{dt}=\bar{\Lambda}^\infty n\left(1-\frac{\log(n)}{\log(\kappa)}\right)$.
\end{exam}
\begin{exam}\label{logex2}Logistic growth with reduced reproduction due to competition:$$\begin{cases}\Lambda_0^{\kappa,n}=\Lambda^\infty_0,\\\Lambda_2^{\kappa,n}=\Lambda^\infty_2+\frac{n}{\kappa}\left(-\Lambda^\infty_2+\Lambda^\infty_0\right),\\\Lambda_i^{\kappa,n}=\Lambda^\infty_i=0,\quad&i\not\in\{0,2\},\end{cases}$$
 and $T_\kappa=\min\{t\geq0:N_\kappa(t)\geq x\kappa\}$ for some $x\in(0,1)$.
\end{exam}

\begin{exam}Exponential-to-polynomial growth with reduced reproduction due to competition:
$$
\begin{cases}\Lambda_0^{\kappa,n}=\Lambda^\infty_0,\\
\Lambda_2^{\kappa,n}=\Lambda^\infty_0+\frac{\kappa}{\kappa+n^{a}}(\Lambda^\infty_2-\Lambda^\infty_0),\\
\Lambda_i^{\kappa,n}=\Lambda^\infty_i=0,\quad &i\not\in\{0,2\},\end{cases}
$$
for some $a\in[0,1)$. Here $T_\kappa$ may be anything which converges to infinity.
\end{exam}

\begin{remark}For Examples \ref{logex1}, \ref{gompex} and \ref{logex2}, $\kappa$ is the population's carrying capacity in the sense that the population has zero growth rate while the population size is equal to $\kappa$.\end{remark}

\section{Coalescent tree}\label{mainresultsection}
Let $m\in\mathbb{N}$. On the event $\{N_\kappa(T_\kappa)\geq m\}$ that at least $m$ individuals are alive at time $T_\kappa$, let $U^1_\kappa,..,U^m_\kappa$ be a uniform sample without replacement of $m$ individuals from the population $\mathcal{N}_\kappa(T_\kappa)$. For $t\geq0$ and $\kappa\in\mathbb{N}$, let $\sim_{t,\kappa}$ be the equivalence relation on $\{1,..,m\}$ defined by
$$
i\sim_{t,\kappa}j\iff \exists u\in\mathcal{N}_\kappa(t), u\preceq U_\kappa^i\text{ and }u\preceq U_\kappa^j.
$$Let $\pi_\kappa(t)$ be the partition of $\{1,..,m\}$ induced by $\sim_{t,\kappa}$. That is, $i,j\in\{1,..,m\}$ are in the same block of $\pi_\kappa(t)$ if and only if $U_\kappa^i$ and $U^j_\kappa$ share an ancestor at time $t$. The \textit{coalescent tree} is the partition-valued process $(\pi_\kappa(t))_{t\geq0}$. Representing coalescent trees as partition-valued processes is standard \cite{Berestycki}. Less standard is that we view the tree forwards in time, with elements of the partition breaking up. This contrasts with the traditional view of coalescent trees backwards in time, where elements of the partition eponymously coalesce.

\subsection{Exponential growth}
Before stating our main result which is convergence of the coalescent tree, we describe the limiting coalescent tree with reference to recent results for density-\textit{independent} branching processes.

The coalescent trees of branching processes have been extensively studied \cite{Popovic,LambertStadler,LambertPopovic,BertoinLeGall,Hong,AthreyaHong,Athreya,HJR,Lambertbinary,JLPoisson}. In particular, we note that Johnston \cite{Johnston} recently gave a broad account of the coalescent tree arising from a continuous-time Galton-Watson branching process, including a limit result for the supercritical regime - which is our interest here. The coalescent tree determined by Johnston requires a little more notation. Let $(N_\infty(t))_{t\geq0}$ be a continuous-time Markov process on $\mathbb{N}\cup\{0\}$ with  transitions $N_\infty(t)\mapsto N_\infty(t)+i-1$ at rate $N_\infty(t)\Lambda^\infty_i$ for $i\in\mathbb{N}\cup\{0\}$, and with initial condition $N_\infty(0)=1$. In words, $N_\infty(t)$ is the population size if the carrying capacity were infinite. The per-individual growth rate is denoted $\bar{\Lambda}^\infty=\sum_i(i-1)\Lambda_i^\infty$. Let $H_t^j(s)=\frac{d^j}{ds^j}\mathbb{E}s^{N_\infty(t)}$ for $s,t\geq0$ and $j\in\mathbb{N}$.  Let $\psi^j(v)=\frac{d^j}{dv^j}\mathbb{E}e^{-vW}$ for $v\geq0$ and $j\in\mathbb{N}\cup\{0\}$, where $W=\lim_{t\rightarrow\infty}e^{-\bar{\Lambda}^\infty t}N_\infty(t)$ (that the limit $W\in[0,\infty)$ exists almost surely is a classic branching process result \cite{an}). Finally let $(\pi_\infty(t))_{t\geq0}$ be a process on the partitions of $\{1,..,m\}$, distributed as
\begin{align}
&\mathbb{P}[\pi_\infty(t_1)=\gamma_1,..,\pi_\infty(t_r)=\gamma_r]\nn\\
&=\frac{(-1)^me^{-m\bar{\Lambda}^\infty t_r}}{1-\psi(\infty)}\int_0^\infty \frac{v^{m-1}}{(m-1)!}\prod_{i=0}^{r-1}\prod_{\Gamma\in\gamma_i}H_{t_i-t_{i-1}}^{b_i(\Gamma)}\left(\psi(e^{-\bar{\Lambda}^\infty t_{i+1}}v)\right)\nn\\
&\quad\quad\quad\quad\quad\quad\quad\quad\quad\quad\quad\quad\quad\quad\times\prod_{\Gamma\in\gamma_r}\psi^{\#\Gamma}\left(e^{-\bar{\Lambda}^\infty t_r}v\right)dv,\label{ctd}
\end{align}
where $0<t_1<...<t_r$; and $\gamma_1,..,\gamma_r$ are partitions of $\{1,..,m\}$ such that each $\Gamma\in\gamma_i$ is the union of $b_i(\Gamma)\geq1$ elements of $\gamma_{i+1}$, for $i=1,..,r-1$.
\begin{ther}[Theorem 3.5 of \cite{Johnston}]\label{Jcor}Consider Example \ref{supex}, which is a density-independent supercritical branching process. As $\kappa\rightarrow\infty$, $\pi_\kappa$ on the event $\{N_\kappa(T_\kappa)\geq m\}$ converges in finite dimensional distributions to $\pi_\infty$. 
\end{ther}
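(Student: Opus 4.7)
My plan is to compute the joint probability $\mathbb{P}[\pi_\kappa(t_1) = \gamma_1, \ldots, \pi_\kappa(t_r) = \gamma_r]$ directly for fixed times $0 < t_1 < \cdots < t_r$ and compatible nested partitions, then pass to the limit $\kappa \to \infty$ using the classical Kesten--Stigum convergence $e^{-\bar\Lambda^\infty \kappa} N_\infty(\kappa) \to W$ almost surely. In Example \ref{supex} we have $\Lambda^{\kappa,n}_i = \Lambda^\infty_i$ and $T_\kappa = \kappa$, so $(\mathcal{N}_\kappa(t))_{t\geq 0}$ is just a classical supercritical continuous-time Galton--Watson process and only the sampling time depends on $\kappa$.

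Conditioning on the full genealogy up to time $t_r$, each block $\Gamma \in \gamma_i$ picks out a distinct ancestor $u_\Gamma \in \mathcal{N}_\kappa(t_i)$ common to all sampled individuals in $\Gamma$, with $u_\Gamma \prec u_{\Gamma'}$ whenever $\Gamma \in \gamma_i$ strictly contains $\Gamma' \in \gamma_{i+1}$. Since the $m$ sampled individuals are drawn uniformly without replacement from $\mathcal{N}_\kappa(\kappa)$, on $\{N_\infty(\kappa) \geq m\}$ the joint partition probability equals
\[
\mathbb{E}\Bigl[\frac{\mathrm{I}\{N_\infty(\kappa) \geq m\}}{(N_\infty(\kappa))_m} \sum_{\text{compatible assignments}} \prod_{\Gamma \in \gamma_r} (Z^{(u_\Gamma)}(\kappa - t_r))_{\#\Gamma}\Bigr],
\]
where $Z^{(u)}(\cdot)$ denotes the subtree size started at $u$ and $(x)_k := x(x-1)\cdots(x-k+1)$ is the falling factorial.

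To linearise the denominator I would invoke the identity $1/x^m = \int_0^\infty \frac{v^{m-1}}{(m-1)!} e^{-vx}\,dv$ (valid for $x > 0$). The Laplace factor $e^{-vN_\infty(\kappa)}$ then factorises as $\prod_{u \in \mathcal{N}_\kappa(t_r)} e^{-vZ^{(u)}(\kappa - t_r)}$ over the independent subtrees at any chosen time slice, and the falling factorials combine with these factors via the PGF derivative identity $H^j_t(s) = \mathbb{E}[(N_\infty(t))_j s^{N_\infty(t)-j}]$. Propagating the computation backwards through $t_r, t_{r-1}, \ldots, t_1$, at each slice the requirement that $b_i(\Gamma)$ specific descendants of an ancestor $u_\Gamma$ serve as ancestors at the next slice is extracted by taking a $b_i(\Gamma)$-th derivative of the relevant PGF, while the remaining descendants contribute their own subtree Laplace functionals inside the derivative. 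Applying Kesten--Stigum after the rescaling $v \mapsto e^{-\bar\Lambda^\infty \kappa} v$ replaces the inner functionals by $\psi(e^{-\bar\Lambda^\infty t_{i+1}} v)$, the scaling factors telescope into the prefactor $e^{-m\bar\Lambda^\infty t_r}$, and the bottom-level subtree transforms become $\psi^{\#\Gamma}(e^{-\bar\Lambda^\infty t_r} v)$. The normalisation $1/(1-\psi(\infty))$ arises from conditioning on $\{N_\kappa(T_\kappa) \geq m\}$, whose limiting probability is $\mathbb{P}[W > 0] = 1 - \psi(\infty)$.

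The main obstacle is the combinatorial and analytical bookkeeping required to identify the nested generating-function structure level by level, to match the derivative orders to the refinement counts $b_i(\Gamma)$ at each block, and to verify that the overall telescoping of scaling factors reproduces formula (\ref{ctd}) exactly. Justifying the exchange of limit with integral and expectation requires uniform integrability, for which Assumption \ref{boundedsecondmoment} (uniformly bounded second moment, which classically implies $\mathbb{E}[W^2] < \infty$) is the essential tool.
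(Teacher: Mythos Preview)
This statement is not proved in the paper: it is cited verbatim as Theorem 3.5 of Johnston and used as input. The paper's own machinery (Sections~\ref{expcoupsection}--\ref{prooffinishsection}) proves the more general Theorem~\ref{mainresult} by a completely different route: it couples the density-dependent process with the density-independent one, shows that descendant fractions $F_\kappa^u(T_\kappa)$ converge in probability to $F_\infty^u(\infty)$ (Proposition~\ref{mrgds}), and then uses an interval-partition sampling construction on $[0,1)$ to deduce that the coalescence times $\tau_\kappa^{i,j}$ converge in probability to $\tau_\infty^{i,j}$. The identification of the limiting law as (\ref{ctd}) is then simply inherited from Johnston's result; the paper never derives (\ref{ctd}) itself.

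Your approach---direct computation of the joint partition probability via the Laplace identity $1/x^m=\int_0^\infty \tfrac{v^{m-1}}{(m-1)!}e^{-vx}\,dv$, factorisation over independent subtrees, the PGF-derivative bookkeeping $H_t^j$, and Kesten--Stigum after rescaling---is essentially the method of Johnston's original paper rather than of the present one. It is a valid self-contained derivation of (\ref{ctd}), but it relies throughout on the branching property (conditional independence of subtrees), which is precisely what fails under density dependence; this is why the present paper replaces it with the coupling/genetic-drift argument. Conversely, the paper's coupling argument shows convergence but does not by itself produce the explicit formula (\ref{ctd}); it needs Johnston's computation to name the limit. So the two approaches are complementary rather than interchangeable.

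One small technical point in your sketch: the sampling-without-replacement denominator is the falling factorial $(N_\infty(\kappa))_m$, not $N_\infty(\kappa)^m$, so your Laplace identity does not apply directly. You need the easy observation that $(N_\infty(\kappa))_m/N_\infty(\kappa)^m\to 1$ on the survival event (since $N_\infty(\kappa)\to\infty$ there), or equivalently that with- and without-replacement sampling are asymptotically equivalent---the paper handles exactly this issue in Lemma~\ref{sampwow}.
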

\begin{remark}The coalescent tree $\pi_\kappa$ is characterised by the coalescence times $$\tau_\kappa^{i,j}=\sup\{t\geq0:\exists u\in\mathcal{N}_\kappa(t),u\preceq U_\kappa^i\text{ and }u\preceq U_\kappa^j\},\quad 1\leq i<j\leq m,$$ and convergence of the coalescent tree is equivalent to convergence of the coalescence times.
\end{remark}
While the distribution of the coalescent tree (\ref{ctd}) may seem tricky at first glance, matters simplify considerably in the case of a binary branching process: $\Lambda_i^{\infty}=0$ for $i\not\in\{0,2\}$. Here the coalescent tree is binary too and there are exactly $m-1$ distinct coalescence times. We denote the unordered coalescence times, in the limit, as $\sigma_\infty^1,..,\sigma_\infty^{m-1}$. The distribution of the $\sigma_\infty^i$ can be obtained from (\ref{ctd}), or more quickly from Harris, Johnston and Roberts' Theorem 1 \cite{HJR}.
Taking the large time limit of their Theorem 1 in the supercritical case gives that the coalescence times are distributed as
\bq\label{ctimes}
&&\mathbb{P}[\sigma_\infty^1\geq t_1,..,\sigma_\infty^{m-1}\geq t_{m-1}]\\&&=m\left[\prod_{i=1}^{m-1}\frac{-e^{-\bar{\Lambda}^\infty t_i}}{1-e^{-\bar{\Lambda}^\infty t_i}}+\sum_{j=1}^{m-1}\frac{\bar{\Lambda}^\infty t_je^{-\bar{\Lambda}^\infty t_j}}{(1-e^{-\bar{\Lambda}^\infty t_j})^2}\left(\prod_{\substack{i=1\\i\not=j}}^{m-1}\frac{e^{-\bar{\Lambda}^\infty t_i}}{e^{-\bar{\Lambda}^\infty t_i}-e^{-\bar{\Lambda}^\infty t_j}}\right)\right]\nn
\eq
for  $t_1,..,t_{m-1}>0$, and the coalescence times are independent of the tree topology which is that of Kingman's coalescent (viewed backwards in time, each coalescence event involves a uniformly chosen pair of partition elements).

Lambert \cite{Lambertbinary} also determined the coalescent tree for a density-independent binary branching process. He elegantly described the coalescent tree in terms of a mixture of coalescent point processes, giving an effectively instantaneous method of tree simulation. Taking the large time limit of his result in the supercritical case recovers (\ref{ctimes}).

\subsection{Generalised logistic growth}

The paper's main result generalises Theorem \ref{Jcor} to a branching process with generalised logistic growth.
\begin{ther}\label{mainresult}
As $\kappa\rightarrow\infty$, $\pi_\kappa$ on the event $\{N_\kappa(T_\kappa)\geq m\}$ converges in finite dimensional distributions to $\pi_\infty$, where $\pi_\infty$ is distributed as (\ref{ctd}). That is, the limiting coalescent tree is parameterised by the intrinsic reproduction rates.
\end{ther}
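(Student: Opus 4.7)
The plan is to reduce the density-dependent case to Theorem \ref{Jcor} by a coupling argument, exploiting the insight from the introduction that the coalescent structure of a small sample is sculpted by the small-population phase, where by Assumption \ref{asslcc} the generalised logistic dynamics are close to the intrinsic ones.

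First I would construct, on a common probability space, the generalised logistic process $\mathcal{N}_\kappa$ together with an auxiliary density-independent supercritical process $\mathcal{N}^\infty_\kappa$ with offspring rates $\Lambda_i^\infty$, coupled so that the two genealogies agree up to the first time at which either process exits $\{N\leq n_\kappa\}$. Assumption \ref{asslcc} and a straightforward thinning argument on Poisson clocks make the probability of disagreement before this time vanish in $\kappa$. Call this coupling time $\tau_\kappa$; with probability tending to one we then have $N_\kappa(\tau_\kappa)=n_\kappa\to\infty$, and the entire genealogy up to time $\tau_\kappa$ is common to both models. This is the content promised for Section \ref{expcoupsection}.

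Second, I would control the decoupled tail from $\tau_\kappa$ to $T_\kappa$. Enumerate the individuals alive at $\tau_\kappa$ as $v_1,\ldots,v_{n_\kappa}$, and for each $k$ let $D_k(t)$ (resp.\ $D^\infty_k(t)$) be the size of the subtree rooted at $v_k$ in $\mathcal{N}_\kappa$ (resp.\ $\mathcal{N}^\infty_\kappa$) at time $\tau_\kappa+t$. In the intrinsic process the $D^\infty_k$ are i.i.d.\ copies of a supercritical Galton--Watson process, and the classical Kesten--Stigum convergence of $e^{-\bar{\Lambda}^\infty t}N^\infty_\kappa(t)$ forces the ratios $D^\infty_k(s)/\sum_{k'}D^\infty_{k'}(s)$ to concentrate around $1/n_\kappa$ uniformly in $s\in[0,T_\kappa-\tau_\kappa]$. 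The key analytic step, to be proved in Section \ref{geneticdriftsection}, is that the same concentration holds for the density-dependent ratios $D_k(s)/\sum_{k'}D_{k'}(s)$ under Assumptions \ref{boundedsecondmoment} and \ref{sclb}. Granted this, a uniform sample of $m$ individuals at time $T_\kappa$ becomes asymptotically indistinguishable from a uniform sample of $m$ ancestors from $\{v_1,\ldots,v_{n_\kappa}\}$, in either model. Since the coupling makes the ancestral tree up to $\tau_\kappa$ common, the partition-valued processes $\pi_\kappa$ and the analogous $\pi^\infty_\kappa$ built from $\mathcal{N}^\infty_\kappa$ share the same finite-dimensional limits, and Theorem \ref{Jcor} applied to $\pi^\infty_\kappa$ delivers $\pi_\infty$ with distribution \eqref{ctd}. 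This translation of population-level coincidence into sample-level coincidence is the role of Sections \ref{samplingsection}--\ref{prooffinishsection}.

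The main obstacle is the genetic-drift estimate in the density-dependent case. In $\mathcal{N}^\infty_\kappa$ subtree sizes are independent and standard supercritical martingale arguments apply; in $\mathcal{N}_\kappa$ the subtree sizes interact through the density dependence, so their relative proportions must be controlled without independence. Assumption \ref{sclb} is exactly what provides the leverage: once $N_\kappa$ is large and growth is superlinear, the per-individual rates are effectively homogeneous across lineages, and the integrated variance of each ratio $D_k(s)/N_\kappa(\tau_\kappa+s)$ is small enough to rule out drift. Assumption \ref{boundedsecondmoment} supplies the uniform-in-$\kappa$ control on offspring fluctuations needed to make this bound survive the double limit $\kappa\to\infty$, $T_\kappa\to\infty$. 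Everything else is bookkeeping.
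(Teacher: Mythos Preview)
Your high-level strategy---couple with the intrinsic process on an initial window, then show that genetic drift after the window is negligible---matches the paper. But the specific mechanism you propose for the second step contains a genuine error, and the paper's implementation is different in a way that matters.

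The problematic claim is that Kesten--Stigum ``forces the ratios $D^\infty_k(s)/\sum_{k'}D^\infty_{k'}(s)$ to concentrate around $1/n_\kappa$''. It does not. Kesten--Stigum gives $e^{-\bar\Lambda^\infty s}D^\infty_k(s)\to W_k$ for i.i.d.\ copies $W_k$ of $W$, so the ratios converge to $W_k/\sum_{k'}W_{k'}$, whose fluctuations about $1/n_\kappa$ are of order $n_\kappa^{-1/2}$, not $o(1/n_\kappa)$. Consequently your conclusion that ``a uniform sample at $T_\kappa$ becomes asymptotically indistinguishable from a uniform sample of $m$ ancestors from $\{v_1,\ldots,v_{n_\kappa}\}$'' does not follow from concentration: the induced sample at $\tau_\kappa$ is size-biased, and the subtree fractions in $\mathcal{N}_\kappa$ and $\mathcal{N}^\infty_\kappa$ evolve differently on $[\tau_\kappa,T_\kappa]$, so there is no a priori reason the two size-biased samples agree. (One can rescue the conclusion via exchangeability of the subtrees together with a bound on $\mathbb{E}\sum_k p_k^2$, but that is a different argument from the one you sketch, and it still requires the drift estimate.)

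The paper avoids this by tracking a different object. Rather than the fractions rooted at the moving population $\{v_1,\ldots,v_{n_\kappa}\}$ at time $\tau_\kappa$, it tracks the descendant fraction $F_\kappa^u(t)$ for each \emph{fixed} $u\in\mathcal{T}$, i.e.\ for vertices near the root. The coupling gives $F_\kappa^u(\zeta_\kappa)=F_\infty^u(\zeta_\kappa)$ with high probability; the latter converges almost surely to $F_\infty^u(\infty)$ by the martingale limit. The genetic-drift lemma then shows, for fixed $u$, that
\[
\mathbb{E}\big[(F_\kappa^u(T_\kappa)-F_\kappa^u(\zeta_\kappa))^2\big]\;\lesssim\;\int_{\zeta_\kappa}^\infty \mathbb{E}\Big[\tfrac{1}{N_\kappa(t)}\mathbf{1}\{t<S_\kappa\}\Big]dt\;\lesssim\;\int_{\zeta_\kappa}^\infty t^{-1/\alpha}\,dt\;\to\;0,
\]
using Assumption~\ref{sclb} only to get the $t^{-1/\alpha}$ decay of $\mathbb{E}[1/N_\kappa(t)]$. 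This yields $F_\kappa^u(T_\kappa)\to F_\infty^u(\infty)$ in probability for every fixed $u$. Since the coalescence times of a finite sample are almost surely finite in the intrinsic limit, only finitely many $u$ near the root are relevant; the sampling is then realised through nested subintervals of $[0,1)$ whose endpoints are finite sums of the $F^u$, and the coalescent tree converges in probability on the coupling. The point is that the $o(1)$ control on $F_\kappa^u(T_\kappa)-F_\kappa^u(\zeta_\kappa)$ is exactly enough when $u$ is fixed (the target $F_\infty^u(\infty)$ is a nondegenerate random variable), whereas the same $o(1)$ control is useless for your $D_k/N_\kappa$ whose target $1/n_\kappa$ is itself $o(1)$.
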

Theorem \ref{mainresult} serves two purposes. First, it demonstrates that the coalescent tree of a supercritical Markov branching process is robust with respect to deviations from exponential growth at large sizes, suggesting that the tree may be applied to genetic data from populations with a variety of growth laws. Second, it is to our knowledge a first complete description of the coalescent tree from a density-dependent branching process. Hopefully the result and proof will inspire more general explorations of coalescence in density-dependent branching processes in the future.

There are open questions. What if individuals are sampled after a carrying capacity is reached? What if the intrinsic offspring distribution is critical, subcritical, or heavy-tailed? What if reproduction and death rates depend on the ages of individuals? Regarding the third question we note that there is an extensive history of research on age-dependent, density-independent branching processes (see Chapter 4 of \cite{an} for an introduction) and there are partial results on their coalescent trees \cite{LambertStadler,Hong,AthreyaHong}. To expand coalescence theory to branching processes with both age-dependence and density-dependence (such as the model in \cite{JagersKlebaner}) would do a great service for the aim of biological realism.

The remainder of the paper is concerned with Theorem \ref{mainresult}'s proof.
\section{Coupling with exponential growth}\label{expcoupsection}
At the base of the proof is a coupling between generalised logistic growth and exponential growth. Exponential growth is described by  $(\mathcal{N}_\infty(t))_{t\geq0}$, a continuous-time Markov process whose state space is the finite subsets of $\mathcal{T}$. The process sees transitions
$$
\mathcal{N}_\infty(t)\mapsto\mathcal{N}_\infty(t)\cup\left(\bigcup_{i=1}^j\{ui\}\right)\backslash\{u\}
$$
 at rate $\Lambda^\infty_jI\{u\in\mathcal{N}_\infty(t)\}$ for $j\in\mathbb{N}\cup\{0\}$ and $u\in\mathcal{T}$. The initial condition is
$$
\mathcal{N}_\infty(0)=\{\emptyset\}=\mathbb{N}^0.$$
Let
$$
N_\infty(t)=\#\mathcal{N}_\infty(t)
$$
be the exponentially growing population size.
Now put the $\mathcal{N}_\kappa(\cdot)$ for $\kappa\in\mathbb{N}\cup\{\infty\}$ on a single probability space. We define their joint distribution by the following.
\begin{enumerate}
\item  The $\mathcal{N}_\kappa(\cdot)$, $\kappa\in\mathbb{N}$, are conditionally independent given $\mathcal{N}_\infty(\cdot)$.
    \item
$\begin{pmatrix}\mathcal{N}_\infty(t)\\\mathcal{N}_\kappa(t)\end{pmatrix}$ transitions to
$$
\begin{cases}\begin{pmatrix}\mathcal{N}_\infty(t)\cup\left(\bigcup_{i=1}^j\{ui\}\right)\backslash\{u\}\\\mathcal{N}_\kappa(t)\cup\left(\bigcup_{i=1}^j\{ui\}\right)\backslash\{u\}\end{pmatrix}\\\quad\quad\quad\text{at rate }\min\{\Lambda^\infty_j,\Lambda_j^{\kappa,N_\kappa(t)}\} I\{u\in \mathcal{N}_\infty(t)\cap \mathcal{N}_\kappa(t)\},\\
&~\\
\begin{pmatrix}\mathcal{N}_\infty(t)\cup\left(\bigcup_{i=1}^j\{ui\}\right)\backslash\{u\}\\\mathcal{N}_\kappa(t)\end{pmatrix}\\
\quad\quad\quad\text{at rate }(\Lambda^\infty_j-\Lambda_j^{\kappa,N_\kappa(t)}) I\{\Lambda^\infty_j>\Lambda_j^{\kappa,N_\kappa(t)},u\in \mathcal{N}_\infty(t)\cap \mathcal{N}_\kappa(t)\}\\\quad\quad\quad\quad\quad\quad\quad+\Lambda^\infty_j I\{u\in \mathcal{N}_\infty(t)\backslash \mathcal{N}_\kappa(t)\},\\
&~\\
\begin{pmatrix}\mathcal{N}_\infty(t)\\\mathcal{N}_\kappa(t)\cup\left(\bigcup_{i=1}^j\{ui\}\right)\backslash\{u\}\end{pmatrix}\\\quad\quad\quad\text{at rate }(\Lambda_j^{\kappa,N_\kappa(t)}-\Lambda^\infty_j) I\{\Lambda_j^{\kappa,N_\kappa(t)}>\Lambda^\infty_j,u\in \mathcal{N}_\infty(t)\cap \mathcal{N}_\kappa(t)\}\\\quad\quad\quad\quad\quad\quad\quad+\Lambda_j^{\kappa,N_\kappa(t)}I\{u\in \mathcal{N}_\kappa(t)\backslash \mathcal{N}_\infty(t)\},\end{cases}
$$
for $j\in\mathbb{N}\cup\{0\}$ and $u\in\mathcal{T}$.
\end{enumerate}
The conditional independence of the $\mathcal{N}_\kappa(\cdot)$ will not be important. We only specify the conditional independence so that \textit{some} joint distribution is specified. On the other hand, the joint transition rates of $\mathcal{N}_\kappa(\cdot)$ and $\mathcal{N}_\infty(\cdot)$ will be helpful for the proof. This coupling maximises the time for which $\mathcal{N}_\kappa(\cdot)$ and $\mathcal{N}_\infty(\cdot)$ remain equal. Initially $\mathcal{N}_\kappa(\cdot)$ and $\mathcal{N}_\infty(\cdot)$ are equal, but their state of equality is broken at rate $$N_\infty(t)\sum_{i=0}^\infty|\Lambda_i^\infty-\Lambda_i^{\kappa,N_\infty(t)}|.$$ Note that
\begin{eqnarray}
&&\mathbb{P}\left[\mathcal{N}_\infty(s)=\mathcal{N}_\kappa(s),\forall s\in[0,t]\right]\nonumber\\
&&=\mathbb{E}\left[\exp\left(-\int_0^tN_\infty(s)\sum_{i=0}^\infty|\Lambda^\infty_i-\Lambda_i^{\kappa,N_\infty(s)}|ds\right)\right].\label{equaltime}
\end{eqnarray}
The next result considers (\ref{equaltime}) in the view of the large carrying capacity limit. It says that the exponential and generalised logistic growth models coincide for a long initial time period.
\begin{lem}\label{coupeq}There exists a sequence of times $(\zeta_\kappa)_{\kappa\in\mathbb{N}}\in(0,\infty)^\mathbb{N}$ with 
$$\lim_{\kappa\rightarrow\infty}\zeta_\kappa=\infty\quad\text{and}\quad\lim_{\kappa\rightarrow\infty}\mathbb{P}\left[\mathcal{N}_\infty(s)=\mathcal{N}_\kappa(s),\forall s\in[0,\zeta_\kappa]\right]=1.$$
\end{lem}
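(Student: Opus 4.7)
The plan is to leverage the integral representation (\ref{equaltime}). Writing $X_\kappa := \int_0^{\zeta_\kappa} N_\infty(s) \sum_{i=0}^\infty |\Lambda_i^\infty - \Lambda_i^{\kappa,N_\infty(s)}|\,ds$, one has $\mathbb{P}[\mathcal{N}_\infty(s)=\mathcal{N}_\kappa(s),\,\forall s\in[0,\zeta_\kappa]] = \mathbb{E}[e^{-X_\kappa}]$, so it suffices to show $X_\kappa \to 0$ in $L^1$ (or, alternatively, in probability with a uniform bound supplied by dominated convergence). Two ingredients drive the argument: Assumption~\ref{asslcc}, which makes the integrand small while $N_\infty(s)\leq n_\kappa$; and the classical fact that $(e^{-\bar{\Lambda}^\infty s}N_\infty(s))_{s\geq 0}$ is a unit-mean non-negative martingale, which controls how long $N_\infty$ stays below $n_\kappa$.

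\medskip

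First I introduce
$$
\epsilon_\kappa := \sup_{n\leq n_\kappa}\sum_{i=0}^\infty|\Lambda_i^{\kappa,n}-\Lambda_i^\infty|,
$$
which tends to $0$ by Assumption~\ref{asslcc}, and set
$$
\zeta_\kappa := \frac{1}{2\bar{\Lambda}^\infty}\min\{\log n_\kappa,\ \log(1/\epsilon_\kappa)\}.
$$
Since $n_\kappa\to\infty$ and $\epsilon_\kappa\to 0$, $\zeta_\kappa\to\infty$, and by construction $e^{\bar{\Lambda}^\infty\zeta_\kappa}\leq\min\{n_\kappa^{1/2},\,\epsilon_\kappa^{-1/2}\}$. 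Let $A_\kappa := \{\sup_{s\leq\zeta_\kappa} N_\infty(s)\leq n_\kappa\}$. Doob's submartingale inequality applied to the non-negative submartingale $(N_\infty(s))_{s\geq 0}$ (with $\mathbb{E}[N_\infty(s)]=e^{\bar{\Lambda}^\infty s}$) then yields
$$
\mathbb{P}[A_\kappa^c] \leq \frac{e^{\bar{\Lambda}^\infty\zeta_\kappa}}{n_\kappa}\leq n_\kappa^{-1/2}\to 0.
$$

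\medskip

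Finally I would estimate, using $1-e^{-x}\leq x$ on $A_\kappa$ and $1-e^{-x}\leq 1$ on $A_\kappa^c$,
$$
1 - \mathbb{E}[e^{-X_\kappa}] = \mathbb{E}[1-e^{-X_\kappa}] \leq \mathbb{P}[A_\kappa^c] + \mathbb{E}\bigl[X_\kappa\,I\{A_\kappa\}\bigr].
$$
On $A_\kappa$ the integrand defining $X_\kappa$ is pointwise bounded by $\epsilon_\kappa N_\infty(s)$, so by Tonelli
$$
\mathbb{E}\bigl[X_\kappa\,I\{A_\kappa\}\bigr] \leq \epsilon_\kappa\int_0^{\zeta_\kappa}\mathbb{E}[N_\infty(s)]\,ds = \frac{\epsilon_\kappa\bigl(e^{\bar{\Lambda}^\infty\zeta_\kappa}-1\bigr)}{\bar{\Lambda}^\infty}\leq \frac{\epsilon_\kappa^{1/2}}{\bar{\Lambda}^\infty}\to 0,
$$
which completes the argument. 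The only real decision is the rate of $\zeta_\kappa$: the symmetric balancing of $\log n_\kappa$ against $\log(1/\epsilon_\kappa)$ ensures that both error sources---the submartingale tail on $A_\kappa^c$ and the integrated $\Lambda$-perturbation on $A_\kappa$---vanish regardless of the relative rates at which $n_\kappa$ diverges and $\epsilon_\kappa$ vanishes; no deeper obstacle appears.
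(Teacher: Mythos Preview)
Your proof is correct and follows essentially the same approach as the paper: define the perturbation size $\epsilon_\kappa$ (the paper calls it $\delta_\kappa$), split according to whether $\sup_{s\leq\zeta_\kappa}N_\infty(s)$ exceeds a threshold, control the bad event via Doob's inequality, and choose $\zeta_\kappa$ as a constant times $\log$ of the minimum of $n_\kappa$ and a negative power of $\epsilon_\kappa$ to balance the two errors. The only cosmetic difference is that on the good event the paper bounds $N_\infty(s)$ pointwise by $\delta_\kappa^{-1/2}$ (strengthening the good event accordingly), whereas you keep $N_\infty(s)$ inside the integral and bound its expectation; both yield the same conclusion.
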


\begin{proof}[Proof of Lemma \ref{coupeq}] Recall from Assumption \ref{asslcc} that $(n_\kappa)$ is some sequence of integers. Let
$$\delta_\kappa=\sup_{n\leq n_\kappa}\sum_{i=0}^\infty|\Lambda_i^{\kappa,n}-\Lambda^\infty_i|.
$$
Then on the event $\left\{\sup_{s\in[0,t]}N_\infty(s)\leq \min\{\delta_\kappa^{-1/2},n_\kappa\}\right\}$, we see the bound
$$
\int_0^{t}N_\infty(s)\sum_{i=0}^\infty|\Lambda^\infty_j-\Lambda_j^{\kappa,N_\infty(s)}|ds\leq t\delta_\kappa^{1/2}.
$$
It follows that we can bound (\ref{equaltime}) below by
\begin{eqnarray}
&&\mathbb{P}\left[\mathcal{N}_\infty(s)=\mathcal{N}_\kappa(s),\forall s\in[0,t]\right]\nonumber\\
&&=\mathbb{E}\left[\exp\left(-\int_0^{t}N_\infty(s)\sum_{i=0}^\infty|\Lambda^\infty_j-\Lambda_j^{\kappa,N_\infty(s)}|ds\right)\right]\nonumber\\
&&\geq\mathbb{E}\left[\exp\left(-t\delta_\kappa^{1/2} \right)I\left\{\sup_{s\in[0,t]}N_\infty(s)\leq \min\big\{\delta_\kappa^{-1/2},n_\kappa\big\}\right\}\right]\nonumber\\
&&=\exp\left(-t\delta_\kappa^{1/2} \right)\mathbb{P}\left[\sup_{s\in[0,t]}N_\infty(s)\leq \min\big\{\delta_\kappa^{-1/2},n_\kappa\big\}\right]\nonumber\\
&&\geq \exp\left(-t\delta_\kappa^{1/2} \right)\left[1-\frac{e^{\bar{\Lambda}^\infty t}}{\min\big\{\delta_\kappa^{-1/2},n_\kappa\big\}}\right],\label{lbep}
\end{eqnarray}
where the last inequality is Doob's martingale inequality ($e^{-\bar{\Lambda}^\infty t}N_\infty(t)$ is a martingale).
But Assumption \ref{asslcc} says that $\delta_\kappa$ converges to zero while $n_\kappa$ converges to infinity. Thus by replacing $t$ with
$$
\zeta_\kappa=\frac{\log\left(\min\{\delta_\kappa^{-1/2},n_\kappa\}\right)}{2\bar{\Lambda}^\infty},
$$
the lower bound (\ref{lbep}) converges to $1$.
\end{proof}
The explicit definition of $\zeta_\kappa$ seen in Lemma \ref{coupeq}'s proof won't be used. Only the properties of  $\zeta_\kappa$ which are seen in the Lemma's statement will be important.

Now for each member of the population we look at the abundance of their descendants. Let
$$
D_{\kappa}^u(t)=\#\{v\in\mathcal{N}_\kappa(t):u\preceq v\}
$$
be the number of individuals alive at time $t\geq0$ which are descendants of $u\in\mathcal{T}$ for $\kappa\in\mathbb{N}\cup\{\infty\}$. Let
$$
F_{\kappa}^u(t)=I\{N_\kappa(t)>0\}\frac{D_\kappa^u(t)}{N_\kappa(t)}
$$
be the fraction of the population at time $t$ which descended from $u$. We will refer to the $F_\kappa^u(t)$ as the descendant fractions.  The descendant fractions will play a crucial role in Theorem \ref{mainresult}'s proof because they will give the probabilities that sampled individuals have particular ancestors. The next result shows convergence of the descendant fractions in the exponential model.
\begin{lem}\label{expdf}For $u\in\mathcal{T}$, the limit
$$
\lim_{t\rightarrow\infty}F_\infty^u(t)=:F_\infty^u(\infty)\in[0,1]
$$
exists almost surely. 
\begin{proof}For any $j,\kappa,n\in\mathbb{N}$,
\begin{eqnarray*}
\sum_{i=1}^ji\log(i)\Lambda^\infty_i&\leq&j\log(j)\sum_{i=1}^j |\Lambda_i^\infty-\Lambda_i^{\kappa,n}|+ \sum_{i=1}^\infty i^2\Lambda_i^{\kappa,n}.
\end{eqnarray*}So by Assumption \ref{asslcc},
\begin{eqnarray*}
\sum_{i=1}^ji\log(i)\Lambda^\infty_i&\leq&\limsup_{\kappa\rightarrow\infty}j\log(j)\sum_{i=1}^j |\Lambda_i^\infty-\Lambda_i^{\kappa,n}|+ \sup_{\kappa,n}\sum_{i=1}^\infty i^2\Lambda_i^{\kappa,n}\\
&=& \sup_{\kappa,n}\sum_{i=1}^\infty i^2\Lambda_i^{\kappa,n}.
\end{eqnarray*}
Then by Assumption \ref{boundedsecondmoment}, $\sum_{i=1}^\infty i\log(i)\Lambda^\infty_i$ is finite, which gives the Kesten-Stigum condition for the supercritical branching process $N_\infty(\cdot)$ (see for example \cite{an} Chapter 3.7, Theorem 2). Therefore, recalling that $\bar{\Lambda}^\infty =\sum_i(i-1)\Lambda^\infty_i$ denotes the exponential growth rate,
\begin{equation}\label{classical}
\lim_{t\rightarrow\infty}e^{-\bar{\Lambda}^\infty t}N_\infty(t)=W\in[0,\infty)
\end{equation}
exists almost surely with $$\{W>0\}=\{N_\infty(t)>0,\forall t\}.$$ Let $\tau=\inf\{t\geq0:u\in\mathcal{N}_\infty(t)\}$ be the first time at which $u$ is alive and observe that, because $\mathcal{N}_\infty(\cdot)$ is a branching process, the number of descendants of $u$ follows the same asymptotic behaviour as (\ref{classical}):
\begin{equation}\label{classicaltr}
I\{\tau<\infty\}\lim_{t\rightarrow\infty}e^{-\bar{\Lambda}^\infty (t-\tau)}D_\infty^u(t)=I\{\tau<\infty\}W'
\end{equation}almost surely, where $W'\overset{d}{=}W$. Combining (\ref{classical}) and (\ref{classicaltr}),
\begin{eqnarray*}
\lim_{t\rightarrow\infty}F_\infty^u(t)&=&\lim_{t\rightarrow\infty}I\{\tau<\infty,N_\infty(t)>0\}e^{-\bar{\Lambda}^\infty \tau}\frac{e^{-\bar{\Lambda}^\infty (t-\tau)}D_\infty^u(t)}{e^{-\bar{\Lambda}^\infty t}N_\infty(t)}\\
&=&I\{\tau<\infty,W>0\}e^{-\bar{\Lambda}^\infty \tau}\frac{W'}{W}.
\end{eqnarray*}
\end{proof}
\end{lem}
Our final result of this section says that the descendant fractions viewed late during the exponential growth phase of the generalised logistic model converge, unsurprisingly, to the same limit as the descendant fractions in the exponential model.
\begin{lem}\label{expas}For $u\in\mathcal{T}$,
\ba
\lim_{\kappa\rightarrow\infty}F_\kappa^u(\min\{\zeta_\kappa,T_\kappa\})=F_\infty^u(\infty)
\ea
in probability, where $F_\infty^u(\infty)$ is defined in Lemma \ref{expdf}.
\begin{proof}Combine Lemmas \ref{coupeq} and \ref{expdf}.
\end{proof}
\end{lem}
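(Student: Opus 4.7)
The proof plan is to show that on a high-probability event the two descendent fractions coincide at time $\min\{\zeta_\kappa,T_\kappa\}$, then use the existence of the almost sure limit in the exponential model to conclude.

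\textbf{Step 1 (good coupling event).} Let $A_\kappa=\{\mathcal{N}_\infty(s)=\mathcal{N}_\kappa(s),\,\forall s\in[0,\zeta_\kappa]\}$. By Lemma \ref{coupeq}, $\mathbb{P}[A_\kappa]\to 1$. Since $\min\{\zeta_\kappa,T_\kappa\}\leq\zeta_\kappa$, on $A_\kappa$ we have $\mathcal{N}_\infty(\min\{\zeta_\kappa,T_\kappa\})=\mathcal{N}_\kappa(\min\{\zeta_\kappa,T_\kappa\})$. Since $D^u_\kappa(t)$ and $N_\kappa(t)$ are measurable functions of $\mathcal{N}_\kappa(t)$, this equality of populations forces $F^u_\kappa(\min\{\zeta_\kappa,T_\kappa\})=F^u_\infty(\min\{\zeta_\kappa,T_\kappa\})$ on $A_\kappa$. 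Thus it suffices to prove
$$F^u_\infty(\min\{\zeta_\kappa,T_\kappa\})\longrightarrow F^u_\infty(\infty)\quad\text{in probability.}$$

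\textbf{Step 2 (random time goes to infinity).} By Lemma \ref{coupeq}, $\zeta_\kappa\to\infty$ deterministically, and by the sampling-time assumption $T_\kappa\to\infty$ in distribution. Consequently, $\min\{\zeta_\kappa,T_\kappa\}\to\infty$ in probability: for any $M>0$, $\mathbb{P}[\min\{\zeta_\kappa,T_\kappa\}<M]\leq \mathbb{P}[\zeta_\kappa<M]+\mathbb{P}[T_\kappa<M]\to 0$.

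\textbf{Step 3 (passing the almost sure limit through a random time).} By Lemma \ref{expdf}, $F^u_\infty(t)\to F^u_\infty(\infty)$ almost surely as $t\to\infty$, which means $R_M:=\sup_{t\geq M}|F^u_\infty(t)-F^u_\infty(\infty)|\to 0$ almost surely, and hence in probability, as $M\to\infty$. Given $\varepsilon>0$, choose $M$ so that $\mathbb{P}[R_M>\varepsilon]<\varepsilon$; then
$$\mathbb{P}\bigl[|F^u_\infty(\min\{\zeta_\kappa,T_\kappa\})-F^u_\infty(\infty)|>\varepsilon\bigr]\leq \mathbb{P}[R_M>\varepsilon]+\mathbb{P}[\min\{\zeta_\kappa,T_\kappa\}<M],$$
which is at most $2\varepsilon$ for all $\kappa$ large by Step 2. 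Combining with Step 1 (by intersecting with $A_\kappa$ and paying the vanishing cost $\mathbb{P}[A_\kappa^c]$) yields the claim.

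\textbf{Expected obstacle.} The only non-cosmetic point is Step 3: one must be careful that even though $T_\kappa$ is random and only tends to infinity in distribution, the composition $F^u_\infty(\min\{\zeta_\kappa,T_\kappa\})$ inherits convergence in probability. The argument above sidesteps any joint-distribution difficulty by dominating the error uniformly over tails via $R_M$, which is well defined because $F^u_\infty$ is cadlag in $t$.
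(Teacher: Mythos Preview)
Your proof is correct and is precisely the natural expansion of the paper's one-line proof ``Combine Lemmas~\ref{coupeq} and~\ref{expdf}'': use the coupling to transfer from $F_\kappa^u$ to $F_\infty^u$ at the random time $\min\{\zeta_\kappa,T_\kappa\}$, then send that time to infinity and invoke the almost sure limit. The tail-domination device via $R_M$ is the right way to handle the random evaluation time and requires no independence between $T_\kappa$ and $\mathcal{N}_\infty(\cdot)$, so there is no hidden gap.
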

This section has viewed the population's initial exponential growth phase. The next section will look at later times when the population growth may be constrained.


\section{Genetic drift}\label{geneticdriftsection}
The quantity $F_\kappa^u(T_\kappa)-F_\kappa^u(\min\{\zeta_\kappa,T_\kappa\})$ can be thought of as `genetic drift' on the time interval $[\zeta_\kappa,T_\kappa]$. In this section we will show that genetic drift is negligible.
Towards this end, the next result bounds genetic drift by the integrated reciprocal of the population size. As an aside note that the Moran and Wright-Fisher processes see analogous behaviour, with genetic drift occurring at a rate proportional to the reciprocal of the population size. 
\begin{lem}\label{gdib}Let $\tilde{T}_\kappa$ be a stopping time with respect to $(\mathcal{F}_\kappa(t))_{t\geq0}$ $($where $\mathcal{F}_\kappa(t)$ is the sigma-algebra generated by $(\mathcal{N}_\kappa(s))_{s\in[0,t]}$$)$. There exists $a>0$ such that for all $\kappa\in\mathbb{N}$ and $s\geq0$,
\ba
&&\mathbb{E}\left[(F^u_\kappa(\tilde{T}_\kappa)-F_\kappa^u(\min\{s,\tilde{T}_\kappa\})^2\right]\\&&\leq a\int_{s}^\infty\mathbb{E}\left[\frac{I\{N_\kappa(t)>0,t<\tilde{T}_\kappa\}}{N_\kappa(t)}\right]dt.
\ea
\end{lem}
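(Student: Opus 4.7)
The plan is to view $F_\kappa^u = D_\kappa^u/N_\kappa$ as an essentially bounded $\mathcal{F}_\kappa$-martingale --- representing the ``genetic drift'' of the lineage descended from $u$ --- to bound its predictable quadratic variation rate by $a/N_\kappa(t)$, and to conclude via the martingale $L^2$-isometry. The non-martingale jumps (the birth of $u$ when $u\neq\emptyset$, and the population extinction when $u=\emptyset$) are handled as small separate contributions.

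The first step is to compute the increment at a generic event in which some $v\in\mathcal{N}_\kappa(t)$ is replaced by $j\geq 0$ offspring (per-individual rate $\Lambda_j^{\kappa,N_\kappa}$):
\[
\Delta F_\kappa^u =
\begin{cases}
(j-1)(N_\kappa-D_\kappa^u)/[N_\kappa(N_\kappa+j-1)], & u\preceq v,\\
-(j-1)D_\kappa^u/[N_\kappa(N_\kappa+j-1)], & u\not\preceq v.
\end{cases}
\]
Weighting by the per-class rates $D_\kappa^u\Lambda_j^{\kappa,N_\kappa}$ (descendant reproduces) and $(N_\kappa-D_\kappa^u)\Lambda_j^{\kappa,N_\kappa}$ (non-descendant reproduces) and summing in $j$ shows that the drift cancels exactly; hence, after the birth time $\tau^u$ of $u$, the bounded process $F_\kappa^u$ is a martingale.

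The second step is to bound the rate
\[
R_\kappa(t) = \sum_{j=0}^\infty \Lambda_j^{\kappa,N_\kappa(t)} \frac{(j-1)^2 D_\kappa^u(t)(N_\kappa(t) - D_\kappa^u(t))}{N_\kappa(t)(N_\kappa(t) + j - 1)^2}
\]
of the predictable quadratic variation. Using $D(N-D)\leq N^2/4$, $(N+j-1)^2\geq N^2$ for $j\geq 1$, and $(N-1)^2\geq N^2/4$ for $N\geq 2$, combined with $\sum_{j\geq 1}(j-1)^2\Lambda_j^{\kappa,n}\leq 2\sum_i i^2\Lambda_i^{\kappa,n}$ from Assumption~\ref{boundedsecondmoment} and a uniform bound on $\Lambda_0^{\kappa,n}$ (accessible via $\Lambda_0^{\kappa,n}\leq\sum_{i\geq 1}i\Lambda_i^{\kappa,n}$ whenever $\bar\Lambda^{\kappa,n}\geq 0$), one arrives at $R_\kappa(t)\leq a\,I\{N_\kappa(t)\geq 2\}/N_\kappa(t)$ for some universal constant $a>0$.

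Optional stopping for the martingale $(F_\kappa^u)^2-\langle F_\kappa^u\rangle$ between the stopping times $\min\{s,\tilde{T}_\kappa\}$ and $\tilde{T}_\kappa\wedge t$, combined with the rate bound above and $t\to\infty$ (bounded and monotone convergence), then yields the stated inequality. The non-martingale jumps are absorbed into the same integral: the birth of $u$ (rate $\leq\sum_{j\geq 1}j\Lambda_j^{\kappa,n}$, squared jump $1/N_\kappa(\tau^u)^2\leq 1/N_\kappa(\tau^u)$) and the extinction jump when $u=\emptyset$ (rate $\Lambda_0^{\kappa,1}$, jump $-1$ at $N_\kappa=1$) both contribute terms of the form $a'\int \mathbb{E}[I\{N_\kappa(r)>0\}/N_\kappa(r)]\,dr$. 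The main obstacle is securing the constant $a$ in the quadratic variation bound: the $j=0$ term forces control of $\Lambda_0^{\kappa,n}$, which Assumption~\ref{boundedsecondmoment} does not directly supply, and this must be inferred from the paper's other standing hypotheses.
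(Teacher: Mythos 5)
Your proposal is essentially the paper's own argument recast in martingale language: the drift cancellation, the quadratic-variation rate $\sum_j\Lambda_j^{\kappa,N}(j-1)^2D_\kappa^u(N_\kappa-D_\kappa^u)/[N_\kappa(N_\kappa+j-1)^2]$, the separate treatment of the birth-of-$u$ and extinction-at-$N_\kappa=1$ jumps, and the final $O(1/N_\kappa)$ bound integrated over $[s,\infty)$ all match the paper's direct computation of $\frac{d}{dt}\mathbb{E}[(G(t)-G(s))^2]$ via the transition rates. The one obstacle you flag, uniform control of $\Lambda_0^{\kappa,n}$ (the $j=0$ term), is present in exactly the same way in the paper's proof, which also attributes the constant $a$ to Assumption~\ref{boundedsecondmoment} even though that assumption does not literally bound $\Lambda_0^{\kappa,n}$, so this is not a defect of your route relative to the paper's.
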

\begin{proof}We look at the process
$$
G(t)=F_\kappa^u(\min\{t,\tilde{T}_\kappa\}),\quad t\geq0,
$$
which sees the following transitions placed into three categories. 
\begin{enumerate}\item  The mother $u^-=(u_i)_{i=1}^{l-1}$ of $u=(u_i)_{i=1}^l$ reproduces: for $n,i>0$,
\end{enumerate}
\ba
&&\lim_{h\downarrow0}h^{-1}\mathbb{P}\left[G(t+h)=\frac{1}{n+i-1}\Bigg|N_\kappa(t)=n,t<\tilde{T}_\kappa,u^-\in\mathcal{N}_\kappa(t)\right]\\&&=\sum_{i=u_l}^\infty\Lambda_i^{\kappa,n}.
\ea
\begin{enumerate}[resume]
\item A descendant of $u$ (including $u$) reproduces or dies: for integers  $n\geq d>0$ and $i\geq0$,
\end{enumerate}
\ba
&&\lim_{h\downarrow0}h^{-1}\mathbb{P}\left[G(t+h)=\frac{d+i-1}{n+i-1}\Bigg|N_\kappa(t)=n,D_\kappa^u(t)=d,t<\tilde{T}_\kappa\right]\\&&=\Lambda_i^{\kappa,n}d
\ea
\quad\quad \quad (for $d=n=1,i=0$ in the above we use the convention $0/0=0$).
\begin{enumerate}[resume]
\item An individual who is not $u$'s mother nor a descendant of $u$ reproduces or dies: for integers  $n> d>0$ and $i\geq0$,
\end{enumerate}
\ba
&&\lim_{h\downarrow0}h^{-1}\mathbb{P}\left[G(t+h)=\frac{d}{n+i-1}\Bigg|N_\kappa(t)=n,D_\kappa^u(t)=d,t<\tilde{T}_\kappa\right]\\&&=\Lambda_i^{\kappa,n}(n-d).
\ea
The transition rates give that
\ba
\frac{d}{dt}\mathbb{E}[G(t)G(s)]=-\Lambda_0^{\kappa,1}\mathbb{E}[I\{N_\kappa(t)=D_\kappa^u(t)=1,t<\tilde{T}_\kappa\}G(s)]
\ea
for $t> s$,
and
%
%
%
\ba
&&\frac{d}{dt}\mathbb{E}[G(t)^2]\nn\\
&&=\sum_{i=u_l}^\infty\mathbb{E}\left[\frac{\Lambda_i^{\kappa,N_\kappa(t)}I\{u^-\in\mathcal{N}_\kappa(t),t<\tilde{T}_\kappa\}}{(N_\kappa(t)+i-1)^2}\right]\\
&&\quad-\Lambda_0^{\kappa,1}\mathbb{E}[I\{N_\kappa(t)=D_\kappa^u(t)=1,t<\tilde{T}_\kappa\}]\label{prob1in2}\\
&&\quad+\sum_{i=0}^\infty\mathbb{E}\Bigg[\frac{\Lambda_i^{\kappa,N_\kappa(t)}(i-1)^2F_\kappa^u(t)(1-F_\kappa^u(t)) N_\kappa(t)}{(N_\kappa(t)+i-1)^2}\nonumber\\&&\quad\quad\quad\quad\quad\quad\quad\times I\{N_\kappa(t)>D_{\kappa}^u(t)>0,t<\tilde{T}_\kappa\}\Bigg].
\ea
It follows that for $t\geq s$,
\ba
&&\frac{d}{dt}\mathbb{E}\left[(G(t)-G(s))^2\right]\\
&&\leq\sum_{i=u_l}^\infty\mathbb{E}\left[\frac{\Lambda_i^{\kappa,N_\kappa(t)}I\{u^-\in\mathcal{N}_\kappa(t),t<\tilde{T}_\kappa\}}{(N_\kappa(t)+i-1)^2}\right]\\
&&\quad+\Lambda_0^{\kappa,1}\mathbb{E}[I\{N_\kappa(t)=1,t<\tilde{T}_\kappa\}]\label{prob1in2}\\
&&\quad+\sum_{i=0}^\infty\mathbb{E}\Bigg[\frac{ \Lambda_i^{\kappa,N_\kappa(t)}(i-1)^2N_\kappa(t)}{(N_\kappa(t)+i-1)^2} I\{N_\kappa(t)>1,t<\tilde{T}_\kappa\}\Bigg]\label{moment5}\\
&&\leq a'\mathbb{E}\left[\frac{I\{N_\kappa(t)>0,t<\tilde{T}_\kappa\}}{N_\kappa(t)}\sum_{i=0}^\infty\Lambda_i^{\kappa,N_\kappa(t)}(i-1)^2\right]\\
&&\leq a\mathbb{E}\left[\frac{I\{N_\kappa(t)>0,t<\tilde{T}_\kappa\}}{N_\kappa(t)}\right],
\ea
where $a',a>0$ are constants independent of $t$ and $\kappa$ with $a$ coming from Assumption \ref{boundedsecondmoment}. Finally
\ba&&\mathbb{E}\left[\left(G(\infty)-G(s)\right)^2\right]\\&&=\int_{s}^\infty\frac{d}{dt}\mathbb{E}[G(t)-G(s))^2]dt\\
&&\leq a\int_{s}^\infty\mathbb{E}\left[\frac{I\{N_\kappa(t)>0,t<\tilde{T}_\kappa\}}{N_\kappa(t)}\right]dt,
\ea
which is the result.\end{proof}
For the next result, let $$M_\kappa=\left\{n\in\mathbb{N}:\bar{\Lambda}^{\kappa,n}\geq c n^{-\alpha}\right\}$$
be the set of population sizes for which growth is superlinear (where $\alpha\in(0,1)$ and $c>0$ are seen in Assumption \ref{sclb}), and let
\bq\label{slt}
S_\kappa=\min\left\{t\geq0:N_\kappa(t)\not\in M_\kappa\right\}
\eq
be the first time at which the bound for superlinear growth is broken or the population becomes extinct.
\begin{lem}\label{gdib2}There exists $b>0$ such that for all $\kappa\in\mathbb{N}$ and $t>0$,
$$
\mathbb{E}\left[\frac{I\{t<S_\kappa\}}{N_\kappa(t)}\right]\leq bt^{-1/\alpha}.
$$
\end{lem}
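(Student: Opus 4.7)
The plan is to derive and solve a differential inequality for
\[
f_\kappa(t) := \mathbb{E}\left[\frac{I\{t<S_\kappa\}}{N_\kappa(t)}\right].
\]
First, I rewrite $f_\kappa(t) = \mathbb{E}[\psi(\tilde N_\kappa(t))]$, where $\tilde N_\kappa(t) := N_\kappa(t\wedge S_\kappa)$ is the process stopped upon leaving $M_\kappa$ and $\psi(n) := I\{n \in M_\kappa\}/n$ (with the convention $\psi(0)=0$). Dynkin's formula applied to $\tilde N_\kappa$ yields
\[
f_\kappa(t) = 1 + \int_0^t \mathbb{E}\big[(\tilde{\mathcal{L}}\psi)(\tilde N_\kappa(s))\big]\,ds,
\]
where $\tilde{\mathcal{L}}$ denotes the generator of the stopped process, which vanishes on $M_\kappa^c$.

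The main computation is a pointwise upper bound on $\tilde{\mathcal{L}}\psi$ for $n \in M_\kappa$. Using $\psi \leq 1/\,\cdot\,$ to pass to $\mathcal{L}(1/\,\cdot\,)$ and the algebraic identity $(i-1)/(n+i-1) = (i-1)/n - (i-1)^2/[n(n+i-1)]$, one obtains for $n \in M_\kappa$ with $n \geq 2$,
\[
(\tilde{\mathcal{L}}\psi)(n) \leq \frac{\Lambda_0^{\kappa,n}}{n(n-1)} - \frac{\bar\Lambda^{\kappa,n}}{n} + \frac{1}{n}\sum_{i\geq 2}\frac{(i-1)^2\Lambda_i^{\kappa,n}}{n+i-1}.
\]
Assumption \ref{boundedsecondmoment} controls the last term by $A/n^2$ where $A := \sup_{\kappa,n}\sum_i i^2\Lambda_i^{\kappa,n}$, and it also forces $\Lambda_0^{\kappa,n} \leq A$ on $M_\kappa$, since there $\Lambda_0^{\kappa,n} = \sum_{i\geq 2}(i-1)\Lambda_i^{\kappa,n} - \bar\Lambda^{\kappa,n} \leq \sum_i i^2 \Lambda_i^{\kappa,n}$. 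Combined with Assumption \ref{sclb} (which gives $\bar\Lambda^{\kappa,n}/n \geq c/n^{1+\alpha}$ on $M_\kappa$), this yields
\[
(\tilde{\mathcal{L}}\psi)(n) \leq \frac{3A}{n^2} - \frac{c}{n^{1+\alpha}} \leq -\frac{c}{2n^{1+\alpha}}
\]
for all $n \geq n_1 := \lceil(6A/c)^{1/(1-\alpha)}\rceil$. The case $n = 1 \in M_\kappa$ is handled separately using $\psi(0) = 0$: direct computation together with $(i-1)/i \geq 1/2$ for $i\geq 2$ and $\bar\Lambda^{\kappa,1}\geq c$ gives $(\tilde{\mathcal{L}}\psi)(1) \leq -c/2$, matching the same $-c'n^{-1-\alpha}$ form.

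By Jensen's inequality applied to the convex map $x\mapsto x^{1+\alpha}$ and the idempotency of indicators,
\[
\mathbb{E}\big[N_\kappa(t)^{-1-\alpha}I\{t<S_\kappa\}\big] = \mathbb{E}\big[(I\{t<S_\kappa\}/N_\kappa(t))^{1+\alpha}\big] \geq f_\kappa(t)^{1+\alpha}.
\]
Combined with the generator bound, this produces a differential inequality of the form $f_\kappa'(t) \leq -(c/2)f_\kappa(t)^{1+\alpha}$, modulo a correction coming from the states $\{1,\dots,n_1-1\}\cap M_\kappa$. Comparison with the ODE $g' = -(c/2)g^{1+\alpha}$, whose explicit solution $g(t) = (g(0)^{-\alpha} + (c\alpha/2)t)^{-1/\alpha} \leq (c\alpha t/2)^{-1/\alpha}$ decays at the right rate, then delivers $f_\kappa(t) \leq bt^{-1/\alpha}$ with $b = (c\alpha/2)^{-1/\alpha}$ (enlarged if necessary to absorb the correction).

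The main obstacle is handling the small-$n$ contribution rigorously. For $n \in M_\kappa$ with $2 \leq n < n_1$ the generator estimate is only $3A/n^2$, which need not be negative. One way to manage this is to bound $\mathbb{P}[\tilde N_\kappa(t) < n_1,\, t<S_\kappa] \leq n_1 f_\kappa(t)$ (using $\tilde N_\kappa^{-1} \geq 1/n_1$ on that event) and absorb the resulting perturbation of the form $+Cf_\kappa(t)$ into the comparison; alternatively, one can replace $\psi$ by the truncated test function $\tilde\psi(n) = I\{n \geq n_1,\, n \in M_\kappa\}/n$ and separately estimate the truncation error $f_\kappa(t) - \mathbb{E}[\tilde\psi(\tilde N_\kappa(t))]$. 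Making this last comparison clean --- preserving the $t^{-1/\alpha}$ rate in the presence of a possibly positive low-$n$ drift --- is the step I expect to take the most care.
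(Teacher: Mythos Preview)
Your strategy---compute the generator of $I\{t<S_\kappa\}/N_\kappa(t)$, bound it pointwise, apply Jensen, and compare with the ODE $r'=-\beta r^{1+\alpha}$---is exactly the paper's. The paper reaches the same intermediate inequality
\[
\frac{d}{dt}f_\kappa(t)\le -\mathbb{E}\Big[I\{t<S_\kappa\}\sum_i\frac{(i-1)\Lambda_i^{\kappa,N_\kappa(t)}}{N_\kappa(t)+i-1}\Big],
\]
and then disposes of the small-$n$ issue in one line by asserting that the inner sum is positive for every $n\in M_\kappa$; combined with the lower bound $cn^{-1-\alpha}-dn^{-2}$ valid for large $n$, this yields $\ge\beta n^{-1-\alpha}$ uniformly and the ODE comparison goes through cleanly.

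Your hesitation here is justified: that positivity assertion is not correct in general. For the binary rates $\Lambda_0^{\kappa,n}\equiv 1$, $\Lambda_2^{\kappa,n}\equiv 2$ one has $\bar\Lambda^{\kappa,n}=1$, so $2\in M_\kappa$ whenever $c\le 2^\alpha$, yet at $n=2$ the sum equals $-1/1+2/3=-1/3<0$. Thus the drift of $I\{t<S_\kappa\}/N_\kappa(t)$ can genuinely be positive at small states in $M_\kappa$, and both your argument and the paper's share the same unresolved step. Note also that your first proposed patch does not close it: bounding the small-state contribution by $Cf_\kappa(t)$ leads to $f_\kappa'\le -(c/2)f_\kappa^{1+\alpha}+Cf_\kappa$, and once $f_\kappa$ is small the linear term dominates, so no decay is forced. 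What is really needed is an independent estimate showing $\mathbb{P}[N_\kappa(t)\le n_1,\,t<S_\kappa]$ decays in $t$ (exploiting that on $\{1,\dots,n_1\}\cap M_\kappa$ the per-capita growth rate is bounded below by $cn_1^{-\alpha}>0$), so that the small-$n$ correction can be treated as a forcing term that does not destroy the $t^{-1/\alpha}$ comparison.
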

\begin{proof}
Observe that
\ba
&&I\{n\in M_\kappa\}\sum_i\frac{(i-1)\Lambda^{\kappa,n}_i}{n+i-1}\\
&&\geq n^{-1}I\{n\in M_\kappa\}\sum_i(i-1)\Lambda^{\kappa,n}_i-\left|\sum_i\Lambda^{\kappa,n}_i\frac{(i-1)}{n+i-1}-\sum_i\Lambda^{\kappa,n}_i\frac{(i-1)}{n}\right|\\
&&= n^{-1}I\{n\in M_\kappa\}\sum_i(i-1)\Lambda^{\kappa,n}_i-\sum_i\Lambda^{\kappa,n}_i\frac{(i-1)^2}{n(n+i-1)}\\
&&\geq cn^{-1-\alpha}-dn^{-2}
\ea
for all $\kappa$ and $n$, where $d>0$ is some constant which exists by Assumption \ref{boundedsecondmoment}.
So
\bq\label{poslhs}
\inf_\kappa\left(I\{n\in M_\kappa\}\sum_i\frac{(i-1)\Lambda^{\kappa,n}_i}{n+i-1}\right)\geq cn^{-1-\alpha}-dn^{-2}
\eq
The left hand side of (\ref{poslhs}) is positive for all $n$ while the right hand side approaches $cn^{-1-\alpha}$ for large $n$. Hence there exists $\beta>0$ such that
\bq\label{willbeusingd}
\inf_\kappa\left(I\{n\in M_\kappa\}\sum_i\frac{(i-1)\Lambda^{\kappa,n}_i}{n+i-1}\right)\geq \beta n^{-1-\alpha}.
\eq
Now
\bq
&&\frac{d}{dt}\mathbb{E}\left[\frac{I\{t<S_\kappa\}}{N_\kappa(t)}\right]\nn\\
&&=-\sum_{i=0}^\infty\mathbb{E}\Bigg[\Lambda_i^{N_\kappa(t),\kappa}I\{t<S_\kappa\}\nn\\&&\quad\quad\times\Bigg(\frac{I\{N_\kappa(t)+i-1\not\in M_\kappa\}(i-1)}{N_\kappa(t)+i-1}+I\{N_\kappa(t)+i-1\in M_\kappa\}\Bigg)\Bigg]\nn\\
&&\leq-\mathbb{E}\left[I\{t<S_\kappa\}\sum_{i=0}^\infty\frac{(i-1)\Lambda^{\kappa,N_\kappa(t)}_i}{N_\kappa(t)+i-1}\right]\nn\\
&&\leq-\beta\mathbb{E}\left[\frac{I\{t<S_\kappa\}}{N_\kappa(t)^{1+\alpha}}\right]\label{usingd},
\eq
where (\ref{usingd}) is due to (\ref{willbeusingd}). Next using Jensen's inequality, (\ref{usingd}) becomes
\bq\label{mart}
\frac{d}{dt}\mathbb{E}\left[\frac{I\{t<S_\kappa\}}{N_\kappa(t)}\right]\leq-\beta\mathbb{E}\left[\frac{I\{t<S_\kappa\}}{N_\kappa(t)}\right]^{1+\alpha}
\eq
which, combined with the initial condition
$$
\mathbb{E}\left[\frac{I\{0<S_\kappa\}}{N_\kappa(0)}\right]\leq1,
$$
implies that
$$
\mathbb{E}\left[\frac{I\{t<S_\kappa\}}{N_\kappa(t)}\right]\leq r(t)
$$
where
$\frac{d}{dt}r(t)=-\beta r(t)^{1+\alpha}$ and $r(0)=1$. But $r(t)=(1+\alpha \beta t)^{-1/\alpha}$ so we are done.
\end{proof}
Lemma \ref{gdib} says that genetic drift is bounded by the expected reciprocal of the population size, integrated over time. Lemma \ref{gdib2} says  that the expected reciprocal of the population size decays faster than the reciprocal of time provided that the growth rate is superlinear. Assumption \ref{sclb} says that the growth rate is indeed superlinear before the sampling time. The next result ties these strands together, saying that genetic drift during the time interval $[\zeta_\kappa,T_\kappa]$ is negligible.
\begin{lem}\label{gdrisn}For $u\in\mathcal{T}$,
$$
\lim_{\kappa\rightarrow\infty}\left(F_\kappa^u(T_\kappa)-F_\kappa^u(\min\{\zeta_\kappa,T_\kappa\})\right)=0
$$
in probability.
\begin{proof}Let $\tilde{T}_\kappa=\min\{S_\kappa,T_\kappa\}$. By Lemmas \ref{gdib} and \ref{gdib2},
\bq
&&\mathbb{E}\left[(F_\kappa(\tilde{T}_\kappa)-F_\kappa^u(\min\{\zeta_\kappa,\tilde{T}_\kappa\})^2\right]\nn\\&&\leq a\int_{\zeta_\kappa}^\infty\mathbb{E}\left[\frac{I\{N_\kappa(t)>0,t<\tilde{T}_\kappa\}}{N_\kappa(t)}\right]dt\nn\\
&&\leq a\int_{\zeta_\kappa}^\infty\mathbb{E}\left[\frac{I\{t<S_\kappa\}}{N_\kappa(t)}\right]dt\nn\\
&&\leq ab\int_{\zeta_\kappa}^\infty t^{-1/\alpha}dt\nn\\
&&\rightarrow0\label{ttctz}
\eq
as $\kappa\rightarrow\infty$. Then
\ba
&&|F_\kappa^u(T_\kappa)-F_\kappa^u(\min\{\zeta_\kappa,T_\kappa\})|\nn\\
&&=|F_\kappa^u(T_\kappa)-F_\kappa^u(\min\{\zeta_\kappa,T_\kappa\})|I\{T_\kappa> S_\kappa\}\nn\\
&&\quad +|F_\kappa^u(\tilde{T}_\kappa)-F_\kappa^u(\min\{\zeta_\kappa,\tilde{T}_\kappa\})|I\{T_\kappa\leq S_\kappa\}\nn\\
&&\leq I\{T_\kappa>S_\kappa\}+|F_\kappa^u(\tilde{T}_\kappa)-F_\kappa^u(\min\{\zeta_\kappa,\tilde{T}_\kappa\})|
\ea
converges to zero in probability by Assumption \ref{sclb} and (\ref{ttctz}).
\end{proof}
\end{lem}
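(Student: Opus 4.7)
The plan is to combine Lemmas \ref{gdib} and \ref{gdib2} on the stopping time $\tilde{T}_\kappa := \min\{S_\kappa, T_\kappa\}$, and then use Assumption \ref{sclb} to replace $\tilde{T}_\kappa$ by $T_\kappa$ at the end. The point of inserting $S_\kappa$ is that Lemma \ref{gdib2} requires superlinear growth, which is only guaranteed up to $S_\kappa$, whereas the target stopping time is $T_\kappa$; Assumption \ref{sclb} tells us that with high probability $T_\kappa \leq S_\kappa$, so the two cut-offs agree asymptotically.

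First, I apply Lemma \ref{gdib} with the stopping time $\tilde{T}_\kappa$ and $s = \zeta_\kappa$, giving
\[
\mathbb{E}\bigl[(F^u_\kappa(\tilde{T}_\kappa) - F_\kappa^u(\min\{\zeta_\kappa, \tilde{T}_\kappa\}))^2\bigr] \leq a \int_{\zeta_\kappa}^\infty \mathbb{E}\left[\frac{I\{N_\kappa(t) > 0,\, t < \tilde{T}_\kappa\}}{N_\kappa(t)}\right] dt.
\]
Because $\tilde{T}_\kappa \leq S_\kappa$, the indicator $I\{t < \tilde{T}_\kappa\}$ is dominated by $I\{t < S_\kappa\}$, so Lemma \ref{gdib2} gives an upper bound of order $\int_{\zeta_\kappa}^\infty t^{-1/\alpha} dt$. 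Since $\alpha \in (0,1)$ implies $1/\alpha > 1$, this integral is finite and tends to zero as $\zeta_\kappa \to \infty$ (which we know from Lemma \ref{coupeq}). By Markov's inequality, $F^u_\kappa(\tilde{T}_\kappa) - F_\kappa^u(\min\{\zeta_\kappa, \tilde{T}_\kappa\}) \to 0$ in probability.

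Next, on the event $\{T_\kappa \leq S_\kappa\}$ we have $\tilde{T}_\kappa = T_\kappa$, so the difference $F_\kappa^u(T_\kappa) - F_\kappa^u(\min\{\zeta_\kappa, T_\kappa\})$ coincides with the quantity already controlled. On the complementary event $\{T_\kappa > S_\kappa\}$, I simply use the trivial bound $|F_\kappa^u(\cdot)| \leq 1$, so this contribution is bounded by $2 \cdot I\{T_\kappa > S_\kappa\}$. Assumption \ref{sclb} says that with probability tending to $1$, the lower bound $\bar{\Lambda}^{\kappa, N_\kappa(t)} \geq c/N_\kappa(t)^\alpha$ holds for all $t \in [0, T_\kappa]$, which is exactly the statement that $T_\kappa \leq S_\kappa$ with high probability. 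Hence $\mathbb{P}[T_\kappa > S_\kappa] \to 0$, and the two contributions together give convergence to zero in probability.

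The main obstacle is the interplay between the two stopping times: $T_\kappa$ is externally prescribed and may in principle exceed the superlinear-growth window $S_\kappa$, whereas Lemma \ref{gdib2} only controls the reciprocal-population size expectation under the superlinear-growth indicator. The trick — routine once spotted — is to work with $\tilde{T}_\kappa = \min\{S_\kappa, T_\kappa\}$ so that both lemmas apply simultaneously, and then discharge the discrepancy using Assumption \ref{sclb}.
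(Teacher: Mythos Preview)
Your proposal is correct and follows essentially the same approach as the paper: define $\tilde{T}_\kappa=\min\{S_\kappa,T_\kappa\}$, chain Lemmas \ref{gdib} and \ref{gdib2} to get $L^2$-convergence of $F_\kappa^u(\tilde{T}_\kappa)-F_\kappa^u(\min\{\zeta_\kappa,\tilde{T}_\kappa\})$, and then split on $\{T_\kappa\le S_\kappa\}$ versus $\{T_\kappa>S_\kappa\}$, handling the latter via Assumption \ref{sclb}. The only cosmetic difference is that the paper bounds the bad-event contribution by $I\{T_\kappa>S_\kappa\}$ rather than your $2\,I\{T_\kappa>S_\kappa\}$, which is immaterial.
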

\begin{prop}\label{mrgds}
For $u\in\mathcal{T}$,
\ba
\lim_{\kappa\rightarrow\infty}F_\kappa^u(T_\kappa)=F_\infty^u(\infty)
\ea
in probability.
\end{prop}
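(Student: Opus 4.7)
The proposition is essentially a corollary of Lemmas \ref{expas} and \ref{gdrisn}, obtained by a straightforward decomposition of $F_\kappa^u(T_\kappa)$ around the time $\min\{\zeta_\kappa,T_\kappa\}$, so my plan is to glue those two results together via the triangle inequality and Slutsky-type reasoning.

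More concretely, the plan is to write
\[
F_\kappa^u(T_\kappa)-F_\infty^u(\infty) = \bigl(F_\kappa^u(T_\kappa)-F_\kappa^u(\min\{\zeta_\kappa,T_\kappa\})\bigr) + \bigl(F_\kappa^u(\min\{\zeta_\kappa,T_\kappa\})-F_\infty^u(\infty)\bigr).
\]
The first bracket tends to $0$ in probability as $\kappa\to\infty$ by Lemma \ref{gdrisn}, which controls the accumulated genetic drift on the interval $[\zeta_\kappa,T_\kappa]$. The second bracket tends to $0$ in probability by Lemma \ref{expas}, which says that the descendant fraction evaluated at the end of the exponential-growth window $\min\{\zeta_\kappa,T_\kappa\}$ already matches the almost-sure limit $F_\infty^u(\infty)$ of the pure exponential model (via the coupling in Lemma \ref{coupeq}). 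Therefore the triangle inequality gives $F_\kappa^u(T_\kappa)-F_\infty^u(\infty)\to 0$ in probability.

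The only point requiring a line of care is that convergence in probability is closed under sums, so both brackets going to $0$ in probability is enough; no moment control or uniform integrability is needed for the conclusion as stated. The heuristic is clean: during the early exponential phase the $\kappa$-model and the $\infty$-model are equal with high probability, so their descendant fractions agree and converge to the classical limit; after that the population is already large and growing superlinearly, so genetic drift is too small to move the descendant fractions. The main conceptual obstacle has already been absorbed into Lemma \ref{gdrisn}, whose proof uses the superlinear growth assumption through Lemmas \ref{gdib} and \ref{gdib2}; once those are in hand there is no further obstacle here. I would present the argument as a two-line proof citing the two lemmas and the triangle inequality.
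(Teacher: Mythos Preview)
Your proposal is correct and matches the paper's own proof, which is literally ``Combine Lemmas \ref{expas} and \ref{gdrisn}.'' Your explicit triangle-inequality decomposition is exactly what that sentence abbreviates.
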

\begin{proof}Combine Lemmas \ref{expas} and \ref{gdrisn}.
\end{proof}
Proposition \ref{mrgds} is at the heart of Theorem \ref{mainresult}'s proof. It says that the generalised logistic and exponential models see the same limiting descendant fractions. The descendant fractions view a particular aspect of the population's genealogical structure. The next section translates our viewpoint from the population level to the sample level.

\section{Sampling}\label{samplingsection}
We will sample from the population in a way which makes use of the fact that the descendant fractions converge in probability. Our sampling method will ultimately ensure that the coalescent tree converges in probability too. The sampling structure is defined via subintervals of $[0,1)$.

Let
$$\Theta_\kappa^\emptyset(t)=\left[0,F_\kappa^\emptyset(t)\right)\subset[0,1)
$$
and then recursively for $u\in\mathcal{T}$,
$$
\Theta_\kappa^{uj}(t)=\left[\inf\Theta_\kappa^u(t)+\sum_{i=1}^{j-1}F_\kappa^{ui}(t),\quad\inf\Theta_\kappa^u(t)+\sum_{i=1}^{j}F_\kappa^{ui}(t)\right)\subset[0,1),
$$
for $j\in\mathbb{N}$, $\kappa\in\mathbb{N}\cup\{\infty\}$, and $t\in[0,\infty]$.

The $\Theta_\kappa^u(t)$ have some important properties. First, $|\Theta_\kappa^u(t)|=F^u_{\kappa}(t)$. Second, $u\preceq v$ implies $\Theta_\kappa^u(t)\supset\Theta_\kappa^v(t)$. Third, on the event  $\{N_\kappa(t)>0\}$ and for $s\leq t$, $\left\{\Theta_\kappa^u(t):u\in\mathcal{N}_\kappa(s)\right\}$ is a partition of $[0,1)$. In other words, the $\Theta_\kappa^u(t)$ provide a `stick-breaking' view of the ancestral structure of all individuals alive at time $t$: the size of the interval corresponding to $u$ matches the fraction of living individuals at time $t$ which descended from $u$, and this interval is partitioned by the intervals of $u$'s children (unless $u$ herself is alive at time $t$).

Let $X^1,..,X^m$ be i.i.d. uniform random variables on $[0,1)$ independently of everything else. On the event $\{N_\kappa(T_\kappa)>0\}$, let $V^i_\kappa$ be the unique element of $\mathcal{N}_\kappa(T_\kappa)$ such that $X^i\in \Theta_\kappa^{V^i_\kappa}(T_\kappa)$. On the event of population extinction $\{N_\kappa(T_\kappa)=0\}$, let $V^i_\kappa=\Delta$, where $\Delta\not\in\mathcal{T}$ is a ghost element introduced for technical convenience.

According to the definition above $(V^i_\kappa)_{i=1}^m$ is, on the event $$E_\kappa=\{N_\kappa(T_\kappa)>0\},$$ a uniform sample from $\mathcal{N}_\kappa(T_\kappa)$ \textit{with} replacement. Contrast this with the statement of Theorem \ref{mainresult} where $(U_\kappa^i)_{i=1}^m$ is, on the event $\{N_\kappa(T_\kappa)\geq m\}$, a uniform sample from $\mathcal{N}_\kappa(T_\kappa)$ \textit{without} replacement. Note the connection: $(V_\kappa^i)_{i=1}^m$ conditioned on the event 
$$\tilde{E}_\kappa=\{N_\kappa(T_\kappa)\geq m\text{ and }V^i_\kappa\not=V^j_\kappa,\forall i\not=j\}$$
is distributed as $(U_\kappa^i)_{i=1}^m$.  The following result says that sampling with or without replacement are asymptotically equivalent to conditioning on
$$
E_\infty=\{N_\infty(t)>0,\forall t\geq0\}.
$$
\begin{lem}\label{sampwow}$\lim_{\kappa\rightarrow\infty}I(E_\kappa)=\lim_{\kappa\rightarrow\infty}I(\tilde{E}_\kappa)=I(E_\infty)$ in probability.
\end{lem}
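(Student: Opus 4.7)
The first equality follows immediately from Proposition \ref{mrgds}. Since every individual has the root as an ancestor, $D^\emptyset_\kappa(t) = N_\kappa(t)$ and hence $F^\emptyset_\kappa(t) = I\{N_\kappa(t)>0\}$; in particular $F^\emptyset_\kappa(T_\kappa) = I(E_\kappa)$. Similarly $F^\emptyset_\infty(\infty) = \lim_{t\to\infty} I\{N_\infty(t)>0\} = I(E_\infty)$, since a Galton-Watson branching process either survives indefinitely or becomes extinct in a.s.\ finite time. Applying Proposition \ref{mrgds} at $u=\emptyset$ thus yields $I(E_\kappa) \to I(E_\infty)$ in probability.

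For the second equality, note that $\tilde E_\kappa \subseteq E_\kappa$ implies $|I(\tilde E_\kappa) - I(E_\kappa)| = I(E_\kappa \setminus \tilde E_\kappa)$, so by what we have just shown it suffices to prove $\mathbb{P}(E_\kappa \setminus \tilde E_\kappa) \to 0$. Decompose
\[
E_\kappa \setminus \tilde E_\kappa \subseteq \{0 < N_\kappa(T_\kappa) < m\} \cup \bigl(E_\kappa \cap \{\exists\, i \neq j : V^i_\kappa = V^j_\kappa\}\bigr).
\]
On $E_\kappa$ every $u \in \mathcal{N}_\kappa(T_\kappa)$ has $D^u_\kappa(T_\kappa) = 1$ (a currently living individual is its own sole descendant), so $|\Theta^u_\kappa(T_\kappa)| = 1/N_\kappa(T_\kappa)$ and the $V^i_\kappa$ are conditionally i.i.d.\ uniform on $\mathcal{N}_\kappa(T_\kappa)$; a union bound then gives conditional collision probability at most $\binom{m}{2}/N_\kappa(T_\kappa)$. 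Hence for every $M \geq m$,
\[
\mathbb{P}(E_\kappa \setminus \tilde E_\kappa) \leq \mathbb{P}(0 < N_\kappa(T_\kappa) \leq M) + \binom{m}{2}/M,
\]
reducing everything to showing $N_\kappa(T_\kappa) \to \infty$ in probability on $E_\infty$.

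For this population blow-up I plan to apply Proposition \ref{mrgds} at many ancestors at once. Fix $M \geq 1$ and $\epsilon > 0$. The sub-tree of surviving lineages $\{u \in \mathcal{T}: F^u_\infty(\infty) > 0\}$ is itself a supercritical Galton-Watson tree of the same mean offspring as the original (a standard generating-function calculation), so on $E_\infty$ its level-$k$ width $|\{u \in \mathbb{N}^k : F^u_\infty(\infty) > 0\}|$ tends to infinity almost surely as $k \to \infty$. Truncating the offspring labels to $\{1,\ldots,L\}$ for large $L$, I choose a finite deterministic set $U \subset \mathbb{N}^k$ of level-$k$ (hence pairwise non-comparable) labels for which
\[
\mathbb{P}\bigl(|\{u \in U: F^u_\infty(\infty) > 0\}| \geq M,\ E_\infty\bigr) > \mathbb{P}(E_\infty) - \epsilon.
\]
Non-comparability of the $u \in U$ ensures that distinct $u$'s with $F^u_\kappa(T_\kappa) > 0$ contribute disjoint, non-empty descendant sets in $\mathcal{N}_\kappa(T_\kappa)$, giving $N_\kappa(T_\kappa) \geq |\{u \in U: F^u_\kappa(T_\kappa) > 0\}|$. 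Finiteness of $U$ upgrades Proposition \ref{mrgds}'s marginal convergence to joint in-probability convergence, and strict positivity of the limits is preserved: for each $u$ with $F^u_\infty(\infty) > 0$, $F^u_\kappa(T_\kappa) > 0$ with probability tending to one. Therefore $\liminf_\kappa \mathbb{P}(N_\kappa(T_\kappa) \geq M, E_\infty) \geq \mathbb{P}(E_\infty) - \epsilon$, and $\epsilon \to 0$ concludes.

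The main obstacle is the blow-up step: one must couple the random branching-tree structure (divergence of surviving lineages in the reduced tree) with the deterministic finite label set $U$ required by Proposition \ref{mrgds}, including a careful truncation of the unbounded offspring-label set $\mathbb{N}$. Once $N_\kappa(T_\kappa) \to \infty$ in probability on $E_\infty$ is in hand, the remaining steps are book-keeping.
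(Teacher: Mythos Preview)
Your argument is correct. The first equality and the reduction to $\mathbb{P}(E_\kappa\setminus\tilde E_\kappa)\to 0$ match the paper exactly. For the blow-up $\mathbb{P}(0<N_\kappa(T_\kappa)\leq M)\to 0$, however, you take a genuinely different route. The paper proceeds analytically: it applies Markov's inequality to get
\[
\mathbb{P}[1\leq N_\kappa(T_\kappa)\leq n]\leq n\,\mathbb{E}\!\left[\frac{I\{N_\kappa(T_\kappa)>0\}}{N_\kappa(T_\kappa)}\right],
\]
splits on $\{S_\kappa<T_\kappa\}$, and then reuses the supermartingale property of $I\{t<S_\kappa\}/N_\kappa(t)$ (already established in the proof of Lemma~\ref{gdib2}) together with Lemma~\ref{coupeq} to show the expectation vanishes. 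Your approach is structural: you invoke Proposition~\ref{mrgds} simultaneously at a finite deterministic set $U\subset\mathbb{N}^k$ of incomparable labels and use that the skeleton tree of a supercritical Galton--Watson process has divergent level-$k$ width, so that many $F^u_\kappa(T_\kappa)$ are strictly positive and hence $N_\kappa(T_\kappa)\geq M$.

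What each buys: the paper's argument is entirely self-contained within the machinery already built (no external Galton--Watson facts, no truncation bookkeeping), and gives the stronger quantitative statement $\mathbb{E}[I\{N_\kappa(T_\kappa)>0\}/N_\kappa(T_\kappa)]\to 0$. Your argument is conceptually cleaner in that it uses Proposition~\ref{mrgds} as a black box---the hard analytic work has already been absorbed there---at the cost of importing the reduced-tree result and carrying out the label-truncation step you flag. The ``strict positivity is preserved'' step is standard but, as written, needs a small $\delta$-cushion argument (choose $\delta>0$ so that $\mathbb{P}(\exists u\in U:0<F^u_\infty(\infty)<\delta)<\epsilon$, then use joint convergence in probability); you clearly have this in mind.
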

\begin{proof}The event $\{F_\kappa^\emptyset(T_\kappa)=1\}$ coincides with the event $E_\kappa$ for $\kappa\in\mathbb{N}\cup\{\infty\}$. Thus by Proposition \ref{mrgds},
\ba
&&\lim_{\kappa\rightarrow\infty}I(E_\kappa)= I(E_\infty)
\ea
in probability. To see convergence of $I(\tilde{E}_\kappa)$ requires more work. A crucial missing ingredient is that the population size at sampling time is large or extinct; this may be intuitively clear but is so far unproven. Let $n\in\mathbb{N}$. Using Markov's inequality 
\bq
&&\mathbb{P}[1\leq N_\kappa(T_\kappa)\leq n]\nn\\
&&=\mathbb{P}\left[\frac{nI\{N_\kappa(T_\kappa)>0\}}{N_\kappa(T_\kappa)}\geq1\right]\nn\\
&&\leq n\mathbb{E}\left[\frac{I\{N_\kappa(T_\kappa)>0\}}{N_\kappa(T_\kappa)}\right]\nn\\
&&\leq n\mathbb{P}[S_\kappa< T_\kappa]+n\mathbb{E}\left[\frac{I\{N_\kappa(\min\{S_\kappa,T_\kappa\})>0\}}{N_\kappa(\min\{S_\kappa,T_\kappa\})}\right],\label{somebound}
\eq
where $S_\kappa$ is the first time at which the growth rate breaks the superlinear lower bound, which was defined in (\ref{slt}). Adapt (\ref{mart}) to see that
$$
\left(\frac{I\{N_\kappa(\min\{t,S_\kappa,T_\kappa\})>0\}}{N_\kappa(\min\{t,S_\kappa,T_\kappa\})}\right)_{t\geq0}
$$
is a supermartingale. Then (\ref{somebound}) is bounded above by
\bq\label{boundthisone}
\mathbb{P}[1\leq N_\kappa(T_\kappa)\leq n]\leq n\mathbb{P}[S_\kappa< T_\kappa]+n\mathbb{E}\left[\frac{I\{N_\kappa(\min\{\zeta_\kappa,S_\kappa,T_\kappa\})>0\}}{N_\kappa(\min\{\zeta_\kappa,S_\kappa,T_\kappa\})}\right].
\eq
Applying Assumption \ref{sclb} and Lemma \ref{coupeq} to (\ref{boundthisone}),
$$
\lim_{\kappa\rightarrow\infty}\mathbb{P}[1\leq N_\kappa(T_\kappa)\leq n]=0.$$
It follows that
\ba
\mathbb{P}[E_\kappa\backslash\tilde{E}_\kappa]&=&\mathbb{P}[1\leq N_\kappa(T_\kappa)<m]\\&&+\sum_{l=m}^\infty\mathbb{P}[N_\kappa(T_\kappa)=l]\left(1-\frac{l(l-1)..(l-m+1)}{l^m}\right)\\&\leq&\mathbb{P}[1\leq N_\kappa(T_\kappa)<n]+\left(1-\frac{n(n-1)..(n-m+1)}{n^m}\right)\\
&\rightarrow&1-\frac{n(n-1)..(n-m+1)}{n^m}
\ea
as $\kappa\rightarrow\infty$, for any integer $n\geq m$. Therefore $\lim_{\kappa\rightarrow\infty}\mathbb{P}[E_\kappa\backslash\tilde{E}_\kappa]=0$ and the proof is done.
\end{proof}
\section{Coalescence}\label{prooffinishsection}
The most recent common ancestor of $V^i_\kappa$ and $V^j_\kappa$ is defined as
\ba
V^{i,j}_\kappa&=&\max\{u\in\mathcal{T}:u\preceq V_\kappa^i\text{ and }u\preceq V_\kappa^j\}.
\ea
for $\kappa\in\mathbb{N}$ and $i\not=j$, where the maximum is with respect to $\prec$ and $\max\emptyset=\Delta$. Equivalently,
$$
V^{i,j}_\kappa=\max\{u\in\mathcal{T}:X^i,X^j\in\Theta_\kappa^u(T_\kappa)\}.
$$
We can extend the definition to $\kappa=\infty$ by setting
$$
V^{i,j}_\infty=\max\{u\in\mathcal{T}:X^i,X^j\in\Theta_\infty^u(\infty)\}.
$$
\begin{lem}\label{mrcaidentity}For $i\not=j$,
$$
\lim_{\kappa\rightarrow\infty}\mathbb{P}\left[V^{i,j}_\kappa=V_\infty^{i,j}\right]=1.$$
\begin{proof}
By Proposition \ref{mrgds}, the boundaries of the interval $\Theta_\kappa^u(T_\kappa)$ converge in probability to the boundaries of $\Theta_\infty^u(\infty)$. So
$$
I\{u\preceq V_\kappa^{i,j}\}=I\{X^i,X^j\in\Theta_\kappa^u(T_\kappa)\}
$$
converges in probability to
$$
I\{u\preceq V_\infty^{i,j}\}=I\{X^i,X^j\in\Theta_\infty^u(\infty)\}.
$$
The result follows.
\end{proof}
\end{lem}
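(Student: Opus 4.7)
The plan is to exploit the correspondence $\{u \preceq V^{i,j}_\cdot\} = \{X^i, X^j \in \Theta^u_\cdot(\cdot)\}$ together with Proposition \ref{mrgds}. Every endpoint of $\Theta^u_\kappa(T_\kappa)$ is a finite sum of values $F^{u'}_\kappa(T_\kappa)$ for $u' \in \mathcal{T}$, so Proposition \ref{mrgds} yields convergence in probability of each such endpoint to its limiting version. Since $X^i$ and $X^j$ are independent uniform variables and so almost surely miss every limiting endpoint, this upgrades to $I\{u \preceq V^{i,j}_\kappa\} \xrightarrow{\mathbb{P}} I\{u \preceq V^{i,j}_\infty\}$ for each fixed $u \in \mathcal{T}$.

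To pass from pointwise--in-$u$ convergence of indicators to equality of the actual maximal common ancestors, I would decompose the event $\{V^{i,j}_\kappa \neq V^{i,j}_\infty\}$ according to $v := V^{i,j}_\infty \in \mathcal{T} \cup \{\Delta\}$. Because $V^{i,j}_\infty$ takes values in this countable set almost surely (an argument using $\sum_{u\text{ at depth }d}F^u_\infty(\infty) \le 1$ rules out infinite-depth common ancestors), the masses $\mathbb{P}[V^{i,j}_\infty = v]$ sum to one; dominated convergence then reduces the claim to showing, for each fixed $v$, that $\mathbb{P}[V^{i,j}_\kappa = v \mid V^{i,j}_\infty = v] \to 1$. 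The case $v = \Delta$ is easy: it contains the extinction event $E_\infty^c$, on which $F^\emptyset_\infty(\infty) = 0$, hence $F^\emptyset_\kappa(T_\kappa) \to 0$ in probability by Proposition \ref{mrgds}, forcing $X^i \notin \Theta^\emptyset_\kappa(T_\kappa)$ and so $V^{i,j}_\kappa = \Delta$.

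For $v \in \mathcal{T}$, on $\{V^{i,j}_\infty = v\}$ the points $X^i, X^j$ lie in two distinct child subintervals $\Theta^{vl_i}_\infty(\infty)$ and $\Theta^{vl_j}_\infty(\infty)$ of $\Theta^v_\infty(\infty)$ with $l_i \neq l_j$ finite, and, by continuity of $X^i, X^j$, a.s.\ in their interiors. Assuming WLOG $l_i < l_j$, the single separating endpoint $E := \inf \Theta^v_\infty(\infty) + \sum_{l \le l_i} F^{vl}_\infty(\infty)$ lies strictly between $X^i$ and $X^j$. Applying the first paragraph to $E$ and to the two endpoints of $\Theta^v_\infty(\infty)$ (each a finite sum of $F$-values), with probability tending to one we obtain $X^i, X^j \in \Theta^v_\kappa(T_\kappa)$ while the $\kappa$-analogue $E_\kappa$ lies strictly between them. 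Since the child intervals $\{\Theta^{vl}_\kappa(T_\kappa)\}_{l \in \mathbb{N}}$ are arranged in increasing order of $l$, no single child can straddle $E_\kappa$, so no child contains both $X^i$ and $X^j$; this forces $V^{i,j}_\kappa = v$.

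The main obstacle is that the child set $\{vl\}_{l \in \mathbb{N}}$ of $v$ is a priori infinite, so one cannot rule out ``bad'' children $\Theta^{vl}_\kappa(T_\kappa) \ni X^i, X^j$ by a union bound. The single-separating-endpoint trick in paragraph three sidesteps this difficulty: convergence of one well-chosen boundary---the one that separates $X^i$ from $X^j$ in the limit---handles all children simultaneously via the monotone ordering of the subintervals.
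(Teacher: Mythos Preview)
Your argument is correct and follows the same approach as the paper: invoke Proposition~\ref{mrgds} to get convergence of the interval endpoints, deduce convergence of the indicators $I\{u\preceq V^{i,j}_\kappa\}$, and conclude. The paper's proof compresses your second and third paragraphs into the sentence ``The result follows,'' whereas you spell out the passage from pointwise-in-$u$ indicator convergence to equality of the maxima via the separating-endpoint trick; this is a welcome elaboration rather than a different method (and note that since each individual almost surely has finitely many offspring under Assumption~\ref{boundedsecondmoment}, the infinite-children worry in your fourth paragraph is in fact moot, though your argument handles it regardless).
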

The coalescence time of $V^i_\kappa$ and $V^j_\kappa$ is 
$$
\tau^{i,j}_\kappa=\sup\{t\in[0,T_\kappa]:V^{i,j}_\kappa\in\mathcal{N}_\kappa(t)\}
$$
for $\kappa\in\mathbb{N}\cup\{\infty\}$ and $i\not=j$, where $\sup\emptyset=\infty$.
\begin{lem}\label{almostfinished}For $i\not=j$,
$$\mathbb{P}\left[\tau_\infty^{i,j}<\infty|\forall t,N_\infty(t)>0\right]=1.
$$
\begin{proof}On the event $\{N_\infty(\kappa)>0\}$, let $\tilde{V}_\kappa^i$ and $\tilde{V}_\kappa^j$ be the unique elements of $\mathcal{N}_\infty(\kappa)$ such that $X^i\in\Theta_\infty^{\tilde{V}_\kappa^i}(\kappa)$ and $X^j\in\Theta_\infty^{\tilde{V}_\kappa^j}(\kappa)$ respectively; meanwhile on the event $\{N_\infty(\kappa)=0\}$, let $\tilde{V}_\kappa^i=\tilde{V}_\kappa^j=\Delta$. That is, $\tilde{V}_\kappa^i$ and $\tilde{V}_\kappa^j$ are randomly sampled with replacement from the exponential population at time $\kappa$. Their most recent common ancestor is
\ba\tilde{V}_\kappa^{i,j}&=&\max\{u\in\mathcal{T}:u\preceq \tilde{V}_\kappa^i\text{ and }u\preceq \tilde{V}_\kappa^j\}\\&=&\max\{u\in\mathcal{T}:X^i,X^j\in\Theta_\infty^u(\kappa)\}.\ea By Lemma \ref{expdf} the boundaries of $\Theta_\infty^u(\kappa)$ converge almost surely as $\kappa\rightarrow\infty$ to the boundaries of $\Theta_\infty^u(\infty)$, and therefore $\tilde{V}_\kappa^{i,j}$ converges almost surely to $V_\infty^{i,j}$. It follows that the coalecence time $$\tilde{\tau}^{i,j}_\kappa=\sup\{t\in[0,\kappa]:\tilde{V}_\kappa^{i,j}\in\mathcal{N}_\infty(t)\}$$as $\kappa\rightarrow\infty$ converges almost surely to $\tau_\infty^{i,j}$. Moreover
$$
\lim_{\kappa\rightarrow\infty}\tilde{\tau}^{i,j}_\kappa I\{N_\infty(\kappa)>0,\tilde{V}_\kappa^i\not=\tilde{V}_\kappa^j\}=\tau^{i,j}_\infty I\{\forall t,N_\infty(t)>0\}
$$almost surely. But by Theorem \ref{Jcor}, $\tilde{\tau}_\kappa^{i,j}$ on the event $\{N_\infty(\kappa)>0,\tilde{V}_\kappa^i\not=\tilde{V}_\kappa^j\}$ converges in distribution to a finite random variable. Thus $\tau_\infty^{i,j}$ on the event $\{\forall t,N_\infty(t)>0\}$ is finite with probability one.
\end{proof}
\end{lem}
\begin{lem}\label{coalestime}For $i\not=j$,
$$
\lim_{\kappa\rightarrow\infty}\mathbb{P}[\tau^{i,j}_\kappa=\tau_\infty^{i,j}]=1.
$$
\begin{proof}
The event $\{\tau^{i,j}_\kappa=\tau^{i,j}_\infty\}$ is a superset of
\bq\label{threeevents}
\{\mathcal{N}_\kappa(t)=\mathcal{N}_\infty(t),\forall t\leq \zeta_\kappa\}\cap\{V^{i,j}_\kappa=V^{i,j}_\infty\}\cap\{\tau^{i,j}_\infty\leq \zeta_\kappa\text{ or }N_\infty(\zeta_\kappa)=0\}.
\eq
By Lemmas \ref{coupeq}, \ref{mrcaidentity} and \ref{almostfinished} respectively, each of the events in (\ref{threeevents}) have probabilities which converge to $1$.
\end{proof}
\end{lem}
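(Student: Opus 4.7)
My plan is to exhibit three high-probability events whose simultaneous occurrence deterministically forces $\tau^{i,j}_\kappa=\tau^{i,j}_\infty$, and then conclude by a union bound on their complements. The three events I would use are (A) the coupling identity $\mathcal{N}_\kappa(t)=\mathcal{N}_\infty(t)$ holds for every $t\in[0,\zeta_\kappa]$; (B) either $\tau^{i,j}_\infty\leq\zeta_\kappa$, or else $N_\infty(\zeta_\kappa)=0$; and (C) the MRCA identity $V^{i,j}_\kappa=V^{i,j}_\infty$.

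First I would verify the deterministic implication on the intersection $(A)\cap(B)\cap(C)$. On the sub-event $\tau^{i,j}_\infty\leq\zeta_\kappa$, the entire lifespan of $V^{i,j}_\infty\in\mathcal{T}$ lies inside $[0,\zeta_\kappa]$; (A) then forces $V^{i,j}_\infty$ to be alive in $\mathcal{N}_\kappa$ at exactly the same times, and (C) identifies this individual with $V^{i,j}_\kappa$, so the two coalescence times agree. On the sub-event $N_\infty(\zeta_\kappa)=0$, the coupling (A) forces $N_\kappa(\zeta_\kappa)=0$ as well; extinction is absorbing in both models, so every descendent fraction $F^u$ is zero at the sampling time, every interval $\Theta^u$ is empty, and both MRCAs reduce to the ghost element $\Delta$, giving $\tau^{i,j}_\kappa=\tau^{i,j}_\infty=\infty$ under the convention $\sup\emptyset=\infty$.

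It then remains to show $\mathbb{P}[(A)\cap(B)\cap(C)]\to 1$. Event (A) is Lemma \ref{coupeq} and event (C) is Lemma \ref{mrcaidentity}, so both probabilities tend to one immediately. For (B), on the non-extinction event $E_\infty$ the MRCA $V^{i,j}_\infty$ is an element of $\mathcal{T}$ whose lifetime in a continuous-time branching process is almost surely finite, so $\tau^{i,j}_\infty<\infty$ a.s.\ on $E_\infty$; on $E_\infty^c$ the exponential process goes extinct at an a.s.\ finite time. Since $\zeta_\kappa\to\infty$ deterministically, in both cases the event in (B) holds once $\kappa$ is large enough, and dominated convergence gives $\mathbb{P}[(B)]\to 1$. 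I do not anticipate a serious obstacle here, since the delicate work has been packaged into Lemmas \ref{coupeq}, \ref{mrcaidentity} and Proposition \ref{mrgds}; the only real care required is bookkeeping in the extinction case, where one must use the $\sup\emptyset=\infty$ convention to argue that both coalescence times are literally $\infty$ rather than merely close.
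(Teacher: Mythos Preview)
Your proposal is correct and follows exactly the paper's approach: the paper exhibits the same three events (A), (B), (C), asserts the superset relation, and invokes Lemmas~\ref{coupeq} and~\ref{mrcaidentity} together with $\zeta_\kappa\to\infty$ to conclude. You supply more justification for the deterministic implication and for the convergence of $\mathbb{P}[(B)]$ than the paper's one-line proof does, but the structure is identical.
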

\begin{proof}[Proof of Theorem \ref{mainresult}] For $\kappa\in\mathbb{N}\cup\{\infty\}$, the coalescent tree is $(\pi_\kappa(t))_{t\geq0}$, where $\pi_\kappa(t)$ is the partition of $\{1,..,m\}$ such that $i$ and $j$ are in the same block if and only if $\tau_{i,j}^\kappa\geq t$.
Lemma \ref{coalestime} implies joint convergence of the coalescence times, which is equivalent to convergence in finite dimensional distributions of the coalescent tree. By Lemma \ref{sampwow} this convergence holds when sampling with or without replacement. The limiting coalescent tree is measurable with respect to the $X^i$ and $\mathcal{N}_\infty(\cdot)$. That is, the limiting coalescent tree is parameterised by the intrinsic reproduction rates.\end{proof}
%
%

\section*{Acknowledgements}
I thank Alex Mcavoy for supportive and inspiring discussions, Michael Nicholson for shrewd comments on a earlier draft of the paper, and an anonymous reviewer for many thoughtful suggestions and corrections. 
\bibliographystyle{unsrt}

\bibliography{References} 
\end{document}